\numberwithin{equation}{section}
\numberwithin{figure}{section}
\theoremstyle{plain}
\newtheorem{thm}{\protect\theoremname}[section]
\theoremstyle{plain}
\newtheorem{prop}[thm]{\protect\propositionname}
\theoremstyle{remark}
\newtheorem{rem}[thm]{\protect\remarkname}
\theoremstyle{definition}
\newtheorem{example}[thm]{\protect\examplename}
\newenvironment{proof}[1][\protect\proofname]{\par
	\normalfont\topsep6\p@\@plus6\p@\relax
	\trivlist
	\itemindent\parindent
	\item[\hskip\labelsep\scshape #1]\ignorespaces
}{%
	\endtrivlist\@endpefalse
}
\providecommand{\proofname}{Proof}
\theoremstyle{plain}
\newtheorem{cor}[thm]{\protect\corollaryname}
\theoremstyle{plain}
\newtheorem{lem}[thm]{\protect\lemmaname}
\providecommand{\corollaryname}{Corollary}
\providecommand{\examplename}{Example}
\providecommand{\lemmaname}{Lemma}
\providecommand{\propositionname}{Proposition}
\providecommand{\remarkname}{Remark}
\providecommand{\theoremname}{Theorem}
\begin{document}
\begin{flushleft}
Immanuel Ben Porat, Einstein Institute of Mathematics, The Hebrew
University of Jerusalem, Jerusalem, Israel. email: \texttt{immanuel.benporat@mail.huji.ac.il}
\par\end{flushleft}

\medskip{}

\begin{onehalfspace}
\begin{center}
\textbf{\large{}CONVEXITY IN MULTIVALUED HARMONIC FUNCTIONS}{\large\par}
\par\end{center}
\end{onehalfspace}
\begin{abstract}
We investigate variants of a three circles type Theorem in the context
of $\mathcal{Q}-$valued functions. We prove some convexity inequalities
related to the $L^{2}$-growth function in the $\mathcal{Q}-$valued
settings. Optimality of these inequalities and comparsion to the case
of real valued harmonic functions is also discussed. 
\end{abstract}
\begin{doublespace}

\section{Introduction}
\end{doublespace}

\subsection{Background}

The study of multi valued harmonic functions was originated in the
pioneering work of Almgren {[}2{]} on Plateau's problem, which asks
for a surface of minimal area among all surfaces stretched across
a given closed contour. Almgren's theory was further extended and
simplified in {[}5{]}. The profound geometric applications of Almgren's
theory to minimal surfaces are not addressed here. Instead, we shall
connect the theory of $\mathcal{Q}-$valued functions to a classical
results from complex analysis, which has some modern reflections.
Let us begin by providing some background to the material that motivated
this work. Let $0\neq u$ be harmonic on the unit ball $B_{1}(0)$.
Then one can associate to $u$ real valued functions $H_{u},D_{u},\overline{H}_{u},I_{u}:\mathrm{(0,1)\rightarrow\mathbb{R}}$
by letting 
\[
H_{u}(r)\coloneqq\underset{\partial B_{r}(0)}{\int}u^{2}(x)d\sigma,
\]

\[
D_{u}(r)\coloneqq\underset{B_{r}(0)}{\int}|\nabla u|^{2}dx,
\]

\[
\overline{H}_{u}(r)\coloneqq\frac{1}{|\partial B_{r}(0)|}H_{u}(r),
\]

and 

\[
I_{u}(r)\coloneqq\frac{rD_{u}(r)}{H_{\mathit{u}}(r)}.
\]

$\overline{H}_{u}(r)$ is called the $L^{2}-$growth function of $u$
and $I_{u}(r)$ is called the frequency function of $u$. The motivation
behind the definition of the function $\overline{H}_{u}$ comes from
a classical result in complex analysis known as the three circles
Theorem:

Given a holomorphic function $f$ on the unit ball $B_{1}$, let $M(r)\coloneqq\underset{B_{r}(0)}{\max}|f|$.
The three circles Theorem, proved by Hadamard, states that the function
$\widetilde{M}:(-\infty,0)\rightarrow\mathbb{R}$ defined by $\widetilde{M}(t)\coloneqq M(e^{t})$
is $\log$ convex, that is $\log\widetilde{M}(t)$ is convex. It is
therefore natural to seek for a three circles type Theorem for real
harmonic functions $u:B_{1}(0)\subset\mathbb{R}^{n}\rightarrow\mathbb{R}$.
It was first observed by Agmon {[}1{]} that such a theorem holds
if the function $M$ is replaced by an appropriate $L^{2}$-version
on the sphere. Namely, Agmon proves that the function $t\mapsto\overline{H}_{u}(e^{t})$
is $\log$ convex. 

In 2015, Lippner and Mangoubi observed the following stronger result:
\begin{thm}
\label{Theorem 1.1 } \textup{( {[}6{]}, Theorem 1.4)}. Let $u:B_{1}(0)\rightarrow\mathbb{R}$
be harmonic. Then $\overline{H}_{u}$ is absolutely monotonic, that
is $\overline{H}_{u}^{(k)}\geq0$ for all $k\in\mathbb{N}$. In particular,
$H_{u}^{(k)}\geq0$ for all $k\in\mathbb{N}$.
\end{thm}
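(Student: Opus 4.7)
The plan is to reduce the claim to an elementary fact about power series with non-negative coefficients, via the spherical harmonic expansion of $u$.

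First, I would invoke the classical decomposition: since $u$ is harmonic on $B_{1}(0)$, it admits an expansion
\[
u(r\theta)=\sum_{k=0}^{\infty}r^{k}Y_{k}(\theta),\qquad \theta\in S^{n-1},
\]
where $Y_{k}$ is a spherical harmonic of degree $k$, and the series converges in $L^{2}(\partial B_{r})$ for every $r\in(0,1)$ (indeed uniformly on compact subsets of $B_{1}$). Next, using the change of variables $x=r\theta$ and the $L^{2}(S^{n-1})$-orthogonality of spherical harmonics of distinct degrees, I would compute
\[
H_{u}(r)=r^{n-1}\int_{S^{n-1}}\Bigl(\sum_{k}r^{k}Y_{k}(\theta)\Bigr)^{2}d\sigma(\theta)=r^{n-1}\sum_{k=0}^{\infty}a_{k}r^{2k},
\]
where $a_{k}:=\|Y_{k}\|_{L^{2}(S^{n-1})}^{2}\geq 0$. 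Dividing by $|\partial B_{r}(0)|=r^{n-1}|S^{n-1}|$ yields
\[
\overline{H}_{u}(r)=\frac{1}{|S^{n-1}|}\sum_{k=0}^{\infty}a_{k}r^{2k}.
\]

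Once this representation is in hand, the theorem is immediate: $\overline{H}_{u}$ is a convergent power series in $r$ with non-negative coefficients (the odd-power coefficients vanishing), and termwise differentiation shows that every derivative $\overline{H}_{u}^{(k)}$ is again a power series with non-negative coefficients, hence non-negative on $(0,1)$. For the ``in particular'' assertion, I would write $H_{u}(r)=|S^{n-1}|\sum_{k}a_{k}r^{n-1+2k}$, which is likewise a power series with non-negative coefficients, so $H_{u}^{(k)}\geq 0$ for all $k\in\mathbb{N}$.

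The only delicate point is the justification of the termwise integration in passing from the spherical harmonic expansion of $u$ to that of $H_{u}(r)$. This is not really an obstacle, however, since on each sphere $\partial B_{r}$ with $r<1$ the partial sums converge to $u$ in $L^{2}(\partial B_{r})$, so squaring and integrating against the orthonormal system $\{Y_{k}\}$ is legitimate by Parseval. Thus the heart of the theorem is the observation that harmonicity forces the $L^{2}$ growth function to be analytic with non-negative Taylor coefficients, after which absolute monotonicity is a purely formal consequence.
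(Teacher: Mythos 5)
Your proof is correct, but it takes a genuinely different route from the one the paper gives in the appendix. The paper's argument (Proposition \ref{Proposition 5.1 } and its corollary) proceeds by Green's identity and the divergence theorem: one shows that for nondecreasing smooth $\phi$, the quantity $d(r)=\int_{B_{1}}u^{2}(rx)\phi'(\|x\|^{2})\,dx$ satisfies $d'(r)=\tfrac{r}{2}\int_{B_{1}}|\nabla u|^{2}(rx)\varphi'(\|x\|^{2})\,dx$ for another nondecreasing $\varphi$, and then iterates using the harmonicity of each $\partial_{i}u$ to conclude absolute monotonicity of $d$; Theorem \ref{Theorem 1.1 } then follows by choosing $\phi(t)=t$. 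Your approach instead expands $u(r\theta)=\sum_{k}r^{k}Y_{k}(\theta)$ in spherical harmonics, uses $L^{2}$-orthogonality on $S^{n-1}$ to write $\overline{H}_{u}$ as a power series in $r$ with nonnegative coefficients $a_{k}=\|Y_{k}\|_{L^{2}(S^{n-1})}^{2}$, and reads off absolute monotonicity (indeed real-analyticity) directly. Each approach has merit: yours is shorter and gives analyticity of $\overline{H}_{u}$ for free without invoking Bernstein's theorem, and makes it transparent that the result is equivalent to the orthogonality structure of spherical harmonics; the paper's Green-identity proof, while longer, establishes the more general Proposition \ref{Proposition 5.1 } with arbitrary nondecreasing weight $\phi$, avoids any expansion machinery, and is closer in spirit to the variational identities (Proposition \ref{Proposition 3.1}) that the rest of the paper relies on in the $\mathcal{Q}$-valued setting, where no spherical-harmonic decomposition is available. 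One small slip worth fixing: since $a_{k}=\|Y_{k}\|_{L^{2}(S^{n-1})}^{2}$, you should have $H_{u}(r)=\sum_{k}a_{k}r^{\,n-1+2k}$ without the extra factor of $|S^{n-1}|$ in front; this is immaterial to the sign argument but should be corrected for consistency with your earlier display.
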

Since Theorem (\ref{Theorem 1.1 }) in {[}6{]} is carried out in
a discrete setting, for the sake of completeness a proof of the continuous
version (as stated above) is presented in the appendix. The second
statement in Theorem (\ref{Theorem 1.1 }) is an immediate consequence
of the first one. It is an exercise to verify that absolute monotinicity
of $\overline{H}$ implies $\log$ convexity of $t\mapsto\overline{H}(e^{t})$
(see {[}7{]}, II, Problem 123). Roughly speaking, we are interested
in the question whether a Lippner-Mangoubi type theorem can be obtained
in the more general setting of multi valued harmonic functions. Let
us emphasize this could have fascinating applications in the regularity
theory of these objects, since absolutely monotonic functions are
real analytic (due to a celebrated theorem of Bernstein. See {[}3{]}).
In some sense, the nonlinear nature of the problem is the main obstacle
in obtaining elliptic regularity type results for multi valued harmonic
functions. We hope that approaching the problem via Bernstein's theorem
may be useful in overcoming some of the difficulties that are created
by the lack of linearity.

\subsection{Main Results }

Given some $P\in\mathbb{R}^{n}$ we denote by $[[P]]$ the Dirac mass
in $P\in\mathbb{R}^{n}$ and define 
\[
\mathcal{A}_{\mathcal{Q}}(\mathbb{R}^{n}):=\{\stackrel[i=1]{\mathfrak{\mathcal{Q}}}{\sum}[[P_{i}]]|P_{i}\in\mathbb{R}^{n},1\le i\le\mathcal{Q}\}.
\]
The set $\mathcal{A}_{\mathcal{Q}}(\text{\ensuremath{\mathbb{R}}}^{n})$
is endowed with a metric $\mathcal{G}$, not specified for the moment,
such that the space $(\mathcal{A}_{\mathcal{Q}}(\text{\ensuremath{\mathbb{R}}}^{n}),\mathcal{G})$
is a complete metric space. We then consider functions $f:\Omega\subset\mathbb{R}^{m}\rightarrow\mathcal{A}_{\mathcal{Q}}(\text{\ensuremath{\mathbb{R}}}^{n})$,
where $\Omega$ is some domain in $\mathbb{R}^{m}$. We call such
functions $\mathcal{Q}$-valued functions. One key fact is the existence
of a notion of a\textsl{ harmonic $\mathcal{Q}$-valued function}.

We adapt the terminology of {[}5{]} and call such functions Dir-minimizing.
As their name suggests, Dir-minimizing functions are defined as functions
minimizing a certain class of integrals, by analogy with the classical
Dirichlet principle. For each $f:B_{1}(0)\subset\mathbb{R}^{m}\rightarrow\mathcal{A}_{\mathcal{Q}}(\mathbb{R}^{n})$
Dir-minimizing we associate a real valued function $\overline{H}_{f}:(0,1)\rightarrow\mathbb{R}$
by letting 
\[
\overline{H}_{f}(r)=\frac{1}{|\partial B_{r}(0)|}\underset{\partial B_{r}(0)}{\int}|f|{}^{2}d\sigma.
\]
The function $\overline{H}_{f}$ is a generalization of the function
introduced in the beginning. Our first aim would be to generalize
Agmon's Theorem to the multi valued case. We will prove 
\begin{prop}
\textsl{\label{Proposition 1.2} }Let $f:B_{1}(0)\subset\mathbb{R^{\mathit{m}}}\rightarrow\mathcal{A}_{\mathcal{Q}}(\text{\ensuremath{\mathbb{R}}}^{n})$
be Dir minimizing such that $H(r)>0$. Define $a:(-\infty,0)\rightarrow\mathbb{R}$
by $a(t)=\log\overline{H}(e^{t})$. Then 

(i) $a'(t)\geq0$ for all $t\in(-\infty,0)$. 

(ii) $a''(t)\geq0$ for a.e. $t\in(-\infty,0)$.

Furthermore, $a$ is convex. 
\end{prop}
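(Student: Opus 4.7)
The plan is to reduce the proposition to two cornerstone identities of the De Lellis--Spadaro theory of Dir-minimizing $\mathcal{Q}$-valued maps. The first is the boundary derivative formula
\[
H'(r) \;=\; \frac{m-1}{r}\, H(r) \;+\; 2\, D(r),
\]
where $D(r)=\int_{B_{r}(0)}|\nabla f|^{2}\,dx$, which follows from differentiation under the integral in polar coordinates together with the first variation identity $\int_{\partial B_{r}(0)}\langle f,\partial_{\nu}f\rangle\,d\sigma = D(r)$ valid for Dir-minimizers. The second is Almgren's frequency monotonicity, which asserts that $I(r)=rD(r)/H(r)$ is absolutely continuous and monotone non-decreasing on any subinterval of $(0,1)$ on which $H>0$. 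Both facts are established in \cite{1}.

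With these in hand, item (i) is a direct computation. Writing $\overline{H}(r)=H(r)/\bigl(|\partial B_{1}(0)|\,r^{m-1}\bigr)$ and differentiating logarithmically,
\[
(\log\overline{H})'(r)=\frac{H'(r)}{H(r)}-\frac{m-1}{r}=\frac{2D(r)}{H(r)}=\frac{2I(r)}{r}.
\]
The chain rule then gives $a'(t)=e^{t}\,(\log\overline{H})'(e^{t})=2\,I(e^{t})$, which is non-negative since $D\geq 0$ and $H>0$ on the range under consideration. This proves (i).

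For (ii) and the convexity claim I would observe that the identity $a'(t)=2\,I(e^{t})$, combined with frequency monotonicity, shows that $a'$ is itself non-decreasing on $(-\infty,0)$, being the composition of the non-decreasing function $I$ with the strictly increasing function $t\mapsto e^{t}$. The standard Lebesgue theory of monotone functions then yields (ii), while the monotonicity of $a'$ \emph{directly} delivers the pointwise convexity of $a$, without any need to invoke the almost-everywhere inequality.

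The substantive content is entirely packaged into the two black boxes above; once they are granted, the proof collapses to a one-line chain-rule manipulation, and this is the sense in which I view the proposition as essentially a translation of Almgren's monotonicity into a convexity statement for $a$. The only subtlety worth flagging is that $I$ is typically only Lipschitz rather than $C^{1}$, so (ii) is most naturally phrased almost everywhere, whereas the unconditional convexity of $a$ must be derived from the stronger pointwise monotonicity of $a'$ rather than from integrating $a''\geq 0$.
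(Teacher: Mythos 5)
Your proof is correct, and for the convexity claim it is genuinely slicker than the paper's. Both arguments rest on the same two black boxes (equation \ref{eq:3.2} and Almgren monotonicity, Theorem \ref{Theorem 3.2}), and your derivation of (i) is essentially the same computation; the real divergence is in how you handle (ii) and the passage to convexity. The paper explicitly computes $(\log H(e^{t}))''$, shows via $I'\geq0$ that it is a.e.\ nonnegative, and then --- because ``$a''\geq0$ a.e.'' does not by itself imply convexity --- separately verifies that $a'$ is absolutely continuous (by writing $a'$ in terms of $D$) and invokes the fundamental theorem of calculus to conclude $a'$ is nondecreasing. Your route short-circuits all of this: the identity $a'(t)=2I(e^{t})$, which follows from the same logarithmic differentiation you used for (i), exhibits $a'$ as the composition of the nondecreasing function $I$ with the increasing map $t\mapsto e^{t}$, so $a'$ is nondecreasing \emph{by inspection} and $a$ is convex without any appeal to the a.e.\ inequality or to the FTC. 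Statement (ii) then falls out as a corollary of monotonicity rather than being the engine of the proof. This is cleaner because it makes the frequency function's role transparent ($a'$ \emph{is} the frequency, up to the exponential change of variables and a factor of $2$) and because it sidesteps the ``nonnegative second derivative a.e.\ does not imply convexity'' subtlety that the paper has to flag in the remark following Proposition \ref{Proposition 1.2}. What the paper's more computational route buys is the explicit formula for $a''$ in terms of $H$, $H'$, $H''$, which it reuses in later arguments (e.g.\ Theorem \ref{Theorem 1.2}); your proof does not produce that formula, but it is not needed for the proposition at hand.
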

Since (ii) holds merely up to a null set, the convexity of $a$ does
not follow directly. This requires an additional consideration. The
following theorem, is the main result of this work
\begin{thm}
\textsl{\label{Theorem 1.2} }Let $f:B_{1}(0)\subset\mathbb{R}^{2}\rightarrow\mathcal{A_{Q}}(\mathbb{R}^{n})$
be a Dir minimizing function. Suppose $f|_{\partial B_{r}}\in W^{1,2}(\partial B_{r}(0),\mathcal{A_{Q}}(\text{\ensuremath{\mathbb{R}}}^{n}))$
for a.e. $0<r<1$. For each $N>0$ define $\overline{h}_{N,f}:(0,1)\rightarrow\mathbb{R}$
by $\overline{h}_{N,f}(r)\coloneqq\overline{H}_{f}(r^{N}).$ Then 

(i) $\overline{h}'_{\frac{\mathcal{Q}}{2},f}(r)\geq0$ for all $r\in(0,1)$.

(ii) $\overline{h}''_{\frac{\mathcal{Q}}{2},f}(r)\geq0$ for a.e.
$r\in(0,1)$. 

Furthermore, $\overline{h}_{\frac{\mathcal{Q}}{2}}$is convex. 
\end{thm}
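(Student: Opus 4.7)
The plan is to differentiate $\overline{h}_{\mathcal{Q}/2,f}(r) = \overline{H}_{f}(r^{\mathcal{Q}/2})$ twice and reduce the sign of each derivative to a growth property of the Dirichlet energy $D(r) = \int_{B_r(0)}|\nabla f|^{2}\,dx$. Setting $N = \mathcal{Q}/2$, I first invoke Almgren's first variation identity, which for a Dir-minimizing $\mathcal{Q}$-valued function in dimension $m=2$ reads $H'(r) = H(r)/r + 2D(r)$. This yields $\overline{H}'(r) = D(r)/(\pi r)$, and the chain rule gives
\[
\overline{h}'_{N,f}(r) = Nr^{N-1}\overline{H}'(r^{N}) = \frac{N\,D(r^{N})}{\pi r} \geq 0,
\]
which proves (i) immediately.

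For (ii), the $W^{1,2}$ boundary hypothesis makes $D(r)$ absolutely continuous with $D'(r) = \int_{\partial B_r(0)}|\nabla f|^{2}\,d\sigma$ for a.e.\ $r$. A second differentiation gives
\[
\overline{h}''_{N,f}(r) = \frac{N}{\pi r^{2}}\bigl[Nr^{N}D'(r^{N}) - D(r^{N})\bigr],
\]
so non-negativity of $\overline{h}''_{N,f}$ with $N = \mathcal{Q}/2$ is equivalent to the growth estimate $sD'(s) \geq (2/\mathcal{Q})D(s)$ for a.e.\ $s\in(0,1)$. This is the crux. I would prove it by decomposing $f$ into its cyclic monodromy pieces $f = \sum_{j}[[f^{(j)}]]$ with $\sum_j \mathcal{Q}_j = \mathcal{Q}$ (a structural fact for $\mathcal{Q}$-valued Dir-minimizers in dimension $2$), and for each piece passing to the $\mathcal{Q}_j$-fold branched cover $\pi_j(z) = z^{\mathcal{Q}_j}$. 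A change of variables gives $\tilde{D}_j(s):= \int_{\tilde{B}_s}|\nabla(f^{(j)}\circ\pi_j)|^{2}\,dA = \mathcal{Q}_j\cdot D_{f^{(j)}}(s^{\mathcal{Q}_j})$, under which the per-piece inequality $sD'_{f^{(j)}}(s) \geq (2/\mathcal{Q}_j)D_{f^{(j)}}(s)$ (which implies $sD'_{f^{(j)}} \geq (2/\mathcal{Q})D_{f^{(j)}}$ since $\mathcal{Q}_j\leq \mathcal{Q}$) becomes $s\tilde{D}_j'(s) \geq 2\tilde{D}_j(s)$. The latter holds on each nonconstant single-valued harmonic branch of $f^{(j)}\circ\pi_j$ because, by its Fourier/polynomial expansion $\sum_{k\geq 0}a_{k}z^{k}$,
\[
\frac{s\tilde{D}_j'(s)}{\tilde{D}_j(s)} = 2\cdot\frac{\sum_{k\geq 1}k^{2}|a_{k}|^{2}s^{2k}}{\sum_{k\geq 1}k\,|a_{k}|^{2}s^{2k}} \geq 2,
\]
every nonconstant mode contributing with weight $k\geq 1$. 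Summing over $j$ yields the estimate for $f$.

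To upgrade the a.e.\ inequality to pointwise convexity, I note that $\overline{h}'_{N,f}(r) = N\,D(r^{N})/(\pi r)$ inherits absolute continuity from $D$, so $\overline{h}''_{N,f}\geq 0$ a.e.\ forces $\overline{h}'_{N,f}$ to be non-decreasing, whence $\overline{h}_{\mathcal{Q}/2,f}$ is convex on $(0,1)$. The main obstacle is the growth estimate $sD'(s)\geq (2/\mathcal{Q})D(s)$: the exponent $N = \mathcal{Q}/2$ is sharp, chosen precisely so that the inequality reduces, after the branched cover, to the classical ratio $s\tilde{D}'/\tilde{D}\geq 2$ satisfied by every nonconstant harmonic function, and equality is attained on the canonical model $\sum_{k=1}^{\mathcal{Q}}[[\zeta^{k}z^{1/\mathcal{Q}}]]$ with $\zeta = e^{2\pi i/\mathcal{Q}}$, for which $\overline{h}_{\mathcal{Q}/2,f}(r) = \mathcal{Q}\,r$ is exactly linear.
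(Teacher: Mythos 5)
Your reduction is correct and matches the paper: differentiating $\overline{h}_{N,f}(r)=\overline{H}(r^{N})$ twice and using $\overline{H}'(r)=D(r)/(\pi r)$ (which in the paper is equation (4.2)/(4.5) for $m=2$) leads to the condition $\overline{h}''_{\mathcal{Q}/2,f}\geq 0 \iff sD'(s)\geq \tfrac{2}{\mathcal{Q}}D(s)$; the paper derives exactly the same inequality in the form $\mathcal{Q}\xi D'(\xi)\geq 2D(\xi)$ after setting $\xi=r^{\mathcal{Q}/2}$. Your derivation of part (i), your observation that $\overline{h}_{\mathcal{Q}/2,f}(r)=\mathcal{Q}r$ for the model $\sum_{w^{\mathcal{Q}}=z}[[w]]$, and your handling of the a.e.-to-everywhere convexity upgrade via absolute continuity of $\overline{h}'$ are all sound and in line with the paper.

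The gap is in the proof of the key inequality $sD'(s)\geq\frac{2}{\mathcal{Q}}D(s)$. You invoke a \emph{global} decomposition $f=\sum_{j}[[f^{(j)}]]$ into cyclic pieces, each of which becomes single-valued and harmonic under the branched cover $\pi_{j}(z)=z^{\mathcal{Q}_{j}}$ \emph{centered at the origin}. This is a valid structure theorem for $\alpha$-homogeneous Dir-minimizers (this is Proposition 8.2 of De Lellis--Spadaro, and it is what Example \ref{Example 4.6} of the paper relies on), but it is \emph{not} a general structural fact for 2D Dir-minimizers on $B_{1}$: a Dir-minimizer can have branch points away from the origin, in which case no such decomposition exists on all of $B_{1}$, and the pullback under $z\mapsto z^{\mathcal{Q}_{j}}$ is not single-valued. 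The paper's own Proposition \ref{Proposition 1.4} furnishes a counterexample to your structural claim: $f(z)=\sum_{w^{2}=2z-1}[[w]]$ is Dir-minimizing with branch point at $z=\tfrac{1}{2}$; for $r>\tfrac{1}{2}$ the two sheets are linked by monodromy around $\tfrac{1}{2}$ yet separate near $0$, so no global cyclic decomposition centered at $0$ exists, and your Fourier argument does not apply to $D(s)$ for $s>\tfrac{1}{2}$. The paper instead proves $\mathcal{Q}\xi D'(\xi)\geq 2D(\xi)$ using the interior estimate of Proposition \ref{Proposition 3.6}(i), $\mathrm{Dir}(f,B_{\xi})\leq \mathcal{Q}\xi\,\mathrm{Dir}(g_{\xi},\partial B_{\xi})$, together with Proposition \ref{Proposition 3.1}(i), which for $m=2$ gives $\mathrm{Dir}(g_{\xi},\partial B_{\xi})=\tfrac{1}{2}\int_{\partial B_{\xi}}|Df|^{2}=\tfrac{1}{2}D'(\xi)$; this route requires no decomposition and works for arbitrary Dir-minimizers with the stated boundary regularity. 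To repair your argument you would need to replace the branched-cover/Fourier step with this interior estimate, or else restrict the claim to homogeneous minimizers.
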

The proof of Theorem (\ref{Theorem 1.2}) will be followed by a higher
dimensional version, that is, when the domain is the $m-$dimensional
unit ba\d{l}l for arbitrary $m>2$. In the higher dimensional version,
the constant $\frac{\mathcal{Q}}{2}$ will be replaced by some constant
depending on $m$ which does not have a simple closed formula. It
should be remarked that unlike in the scenario of Theorem (\ref{Theorem 1.1 }),
the fact that $\overline{H}_{f}$ (and hence $a$ and $\overline{h}_{N,f}$
) is a.e. twice differentiable (and moreover $C^{1}$) is nontrivial. 

A naive version of Theorem (\ref{Theorem 1.1 }) for $\mathcal{Q}$-valued
functions is not valid, as witnessed by the example $f(z)=\underset{w^{3}=z}{\sum}[[w]]$,
for which the associated $\overline{H}$ function has a negative second
derivative for all $0<r<1$. In addition, the following proposition
demonstrates that we do not have an obvious third derivative version
of Theorem (\ref{Theorem 1.2}):
\begin{prop}
\textup{(\cite{4}) }\textsl{\label{Proposition 1.4} }Define $f:B_{1}(0)\subset\mathbb{R}^{2}\mathbb{\rightarrow\mathcal{A}_{\mathrm{2}}\mathrm{(\mathbb{R^{\mathrm{2}}\mathrm{)}}}}$
by $f(z)=\underset{w^{2}=2z-1}{\sum}[[w]]$. Then $f$ is Dir minimizing,$f|_{\partial B_{r}}\in W^{1,2}(\partial B_{r},\mathcal{A_{Q}}(\text{\ensuremath{\mathbb{R}}}^{n}))$
for all $r\neq\frac{1}{2}$ and $\overline{h}'''_{1,f}(r)=\mathcal{\mathit{\overline{H}_{f}'''}}(r)<0$
for all $\frac{1}{2}<r<1$.
\end{prop}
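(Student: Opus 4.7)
My plan is to compute $\overline{H}_f$ explicitly and then differentiate three times under the integral sign. The crucial opening observation is that the two elements of the support of $f(z)$ are the two square roots $\pm w$ of $2z-1$, with $|w|^2 = |2z-1|$, so that
$$|f(z)|^2 = 2|2z-1|.$$
Passing to polar coordinates on $\partial B_r(0)$ gives the explicit formula
$$\overline{H}_f(r) = \frac{1}{2\pi r}\int_0^{2\pi} 2|2re^{i\theta}-1|\,r\,d\theta = \frac{1}{\pi}\int_0^{2\pi}\sqrt{4r^2 - 4r\cos\theta + 1}\,d\theta,$$
which will be the starting point for every subsequent manipulation. Note also that $\overline{h}_{1,f}(r)=\overline{H}_f(r^1)=\overline{H}_f(r)$, so the third-derivative assertion reduces to $\overline{H}_f'''(r) < 0$.

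For the Dir-minimality of $f$, I would invoke the structural principle that in dimension $m=2$, any function of the form $\sum_{w^{\mathcal{Q}}=\phi(z)}[[w]]$ with $\phi$ holomorphic is Dir-minimizing (see the relevant discussion in \cite{1}); in our case $\phi(z)=2z-1$ is entire. The $W^{1,2}$ claim for $r\neq 1/2$ follows once one observes that the unique branch point $z=1/2$ lies off $\partial B_r$: then $f|_{\partial B_r}$ admits a smooth $\mathcal{A}_{\mathcal{Q}}(\mathbb{R}^2)$-valued parametrization, either as two single-valued smooth branches when $r<1/2$, or as a single smooth branch of period $4\pi$ in $\theta$ when $r>1/2$. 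Either way, the parametrization is smooth and hence a fortiori in $W^{1,2}$.

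For the third derivative, observe that for $r>1/2$ the radicand is bounded below by $(2r-1)^2 > 0$ uniformly in $\theta$, so differentiation under the integral sign is justified to all orders. A direct computation gives
$$\frac{d^2}{dr^2}\sqrt{4r^2 - 4r\cos\theta + 1} = \frac{4\sin^2\theta}{(4r^2 - 4r\cos\theta + 1)^{3/2}},$$
and hence
$$\frac{d^3}{dr^3}\sqrt{4r^2 - 4r\cos\theta + 1} = -\frac{24\sin^2\theta\,(2r-\cos\theta)}{(4r^2 - 4r\cos\theta + 1)^{5/2}}.$$
For $r\in(1/2,1)$ one has $2r-\cos\theta \geq 2r-1 > 0$, so the right-hand side is $\leq 0$ pointwise and strictly negative off the null set $\{\sin\theta=0\}$. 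Integrating in $\theta$ yields $\overline{H}_f'''(r) < 0$, as required. The main (and essentially only) obstacle in the plan is the verification of Dir-minimality, which must be extracted from the structural theory of planar Dir-minimizers; the rest is an elementary computation.
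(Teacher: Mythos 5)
Your computation of $\overline{H}_f$ and the three differentiations under the integral sign reproduce the paper's calculation exactly (the paper writes $A(\rho)=\pi\overline{H}_f(\rho)$ and arrives at the same formula $A'''(\rho)=-24\int_0^{2\pi}\sin^2\theta\,(1+4\rho^2-4\rho\cos\theta)^{-5/2}(2\rho-\cos\theta)\,d\theta$), and the Dir-minimality is invoked from the same external source (the paper's Theorem \ref{Theorem 5.1}, i.e.\ \cite{1}, Theorem 10.1 — which, note, is stated only for \emph{affine} $w^{\mathcal{Q}}=az+b$, not arbitrary holomorphic $\phi$; this is harmless here since $\phi(z)=2z-1$ is affine, but your ``structural principle'' as worded overstates what is being cited).

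The genuine gap is in the $W^{1,2}$ boundary claim. You write that $f|_{\partial B_r}$ ``admits a smooth $\mathcal{A}_{\mathcal{Q}}(\mathbb{R}^2)$-valued parametrization'' and conclude it is ``a fortiori in $W^{1,2}$,'' but $\mathcal{A}_2(\mathbb{R}^2)$ is only a metric space, so ``smooth $\mathcal{A}_{\mathcal{Q}}$-valued'' has no intrinsic meaning, and membership in $W^{1,2}(\partial B_r,\mathcal{A}_{\mathcal{Q}})$ is a nontrivial property defined via the metric $\mathcal{G}$: one must show that $\theta\mapsto\mathcal{G}(f(\theta),T)$ is in $W^{1,2}(0,2\pi)$ for every $T\in\mathcal{A}_{\mathcal{Q}}$, with a single $L^2$ dominating function for the derivatives uniformly in $T$. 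The paper proves exactly this in Lemmas \ref{Lemma 5.2 } and \ref{Lemma 5.4}: it first checks that the branch $h(\theta)=\sqrt{2re^{i\theta}-1}$ satisfies $|h'(\theta)|^2=r/|2re^{i\theta}-1|$, which is bounded precisely because $r\neq\tfrac{1}{2}$ keeps the branch point off $\partial B_r$, and then expresses $\mathcal{G}(f(\theta),T)$ through the elementary identity $\mathcal{G}(f(\theta),T)=\tfrac12\bigl(\alpha_T+\beta_T-|\alpha_T-\beta_T|\bigr)$ with $\alpha_T,\beta_T$ Lipschitz post-compositions of $h$, producing the required uniform $L^2$ bound. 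Your observation that the branch point lies off $\partial B_r$ is indeed the key fact, but the remaining verification is the actual content of the step and cannot simply be asserted; you should supply it along the lines of the paper's Lemma \ref{Lemma 5.4}. With that addition, the proposal matches the paper's proof.
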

Both Proposition (\ref{Proposition 1.4}) and the above mentioned
example will be proved and explained in section \ref{Section 5}.
In Proposition (\ref{Proposition 1.4}), our main contribution is
performing the computation which shows that $\overline{H}_{f}'''<0$
for all $\frac{1}{2}<r<1$ and showing that the boundary condition
$f|_{\text{\ensuremath{\partial}}B_{r}}\text{\ensuremath{\in}}W^{1,2}(\text{\ensuremath{\partial}}B_{r},\mathcal{A_{Q}}(\text{\ensuremath{\mathbb{R}}}^{n}))$
is indeed satisfied for all $r\neq\frac{1}{2}$. Proving that $f$
is Dir minimizing is a difficult task, and relies on some rather heavy
machinery from geometric measure theory. It should be emphasized that
the domain of $f$ in both counterexamples is the planar unit disk.
Thus, we did not rule out the possibility that in higher dimensions
the $L^{2}$- growth function of $f$ is more well behaved.

\textit{Organization of the paper. }In section \ref{Section 3}, we
fix some notation and briefly review the frequency function and its
relatives. A detailed exposition may be found in {[}5{]}. Section
\ref{Section 4} is devoted mainly to the proof of Proposition (\ref{Proposition 1.2}),
Theorem (\ref{Theorem 1.2}) and other related convexity inequalities.
In Section \ref{Section 5} we preform the calculations required to
establish the counterexample given in Proposition (\ref{Proposition 1.4}).
In the same context we will prove that the boundary condition $f|_{\text{\ensuremath{\partial}}B_{r}}\text{\ensuremath{\in}}W^{1,2}(\text{\ensuremath{\partial}}B_{r}(0),\mathcal{A_{Q}}(\text{\ensuremath{\mathbb{R}}}^{n}))$
in Theorem (\ref{Theorem 1.2}) is in fact verified for a certain
class of Dir-minimizing functions on the unit disk. 

\begin{doublespace}

\section{\label{Section 3} Preliminaries}
\end{doublespace}

\subsection{Notations}

$d\sigma=$The surface measure.

$\Delta u=$The Laplacian of $u$.

$\cdot=$The standard scalar product on $\mathbb{R}^{m}$.

$\nu=$The unit normal to the sphere.

$C_{m}=$Surface area of the $m-$dimensional unit sphere.

$B_{R}(x)=$The ball of radius $R$ centered at $x$. 

We assume that the reader is familiar with the basic theory of $\mathcal{Q}$-valued
functions. Following {[}5{]}, we recall some basic notions and terminology.
Given some $P\in\mathbb{R}^{n}$ we denote by $[[P]]$ the Dirac mass
in $P\in\mathbb{R}^{n}$ and define 
\[
\mathcal{A}_{\mathcal{Q}}(\mathbb{R}^{n}):=\{\stackrel[i=1]{\mathcal{Q}}{\sum}[[P_{i}]]|P_{i}\in\mathbb{R}^{n},1\le i\le\mathcal{Q}\}.
\]
We endow $\mathcal{A}_{\mathcal{Q}}(\text{\ensuremath{\mathbb{R}}}^{n})$
with a metric $\mathcal{G}$, defined as follows: For each $T_{1},T_{2}\in\mathcal{A}_{\mathcal{Q}}(\text{\ensuremath{\mathbb{R}}}^{n})$
with $T_{1}=\stackrel[\mathit{i=\mathrm{1}}]{\mathcal{Q}}{\sum}[[P_{i}]],T_{2}=\stackrel[\mathit{i=\mathrm{1}}]{\mathcal{Q}}{\sum}[[S_{i}]]$
we define 
\[
\mathcal{G\mathrm{(\mathit{T_{\mathrm{1}},T_{\mathrm{2}}})\coloneqq\underset{\sigma\in\mathscr{P_{\mathcal{Q}}}}{min}\sqrt{\stackrel[\mathit{i=\mathrm{1}}]{\mathcal{Q}}{\sum}|\mathit{P_{i}-S_{\sigma\mathrm{(}i\mathrm{)}}}|^{2}}}},
\]
 where $\mathscr{P_{\mathcal{Q}}}$ is the permutation group on $\{1,...,\mathcal{Q}\}$.
The space $(\mathcal{A}_{\mathcal{Q}}(\text{\ensuremath{\mathbb{R}}}^{n}),\mathcal{G})$
is a complete metric space. A $\mathcal{Q}$- valued function is a
function $f:\Omega\subset\mathbb{R}^{m}\rightarrow\mathcal{A}_{\mathcal{Q}}(\text{\ensuremath{\mathbb{R}}}^{n})$.
Of course, this formalism was designed to capture the notion of a
function attaining multiple values at each point. A regularity theory
can be developed for $\mathcal{Q}$-valued functions. In particular,
the notion of a Sobolev space $W^{1,p}(\Omega,\mathcal{A}_{\mathcal{Q}}(\text{\ensuremath{\mathbb{R}}}^{n}))$
and the notion of an approximate differential denoted by $Df$. 

Suppose $\Omega\subset\mathbb{R}^{m}$ is a bounded domain with smooth
boundary. By analogy with the Dirichlet principle we say that a function
$f:\Omega\rightarrow\mathcal{A}_{\mathcal{Q}}(\text{\ensuremath{\mathbb{R}}}^{n})$
is Dir-minimizing if $f\in W^{1,2}(\Omega,\mathcal{A}_{\mathcal{Q}}(\text{\ensuremath{\mathbb{R}}}^{n}))$
and 

\[
\underset{\Omega}{\int}|Df|^{2}\leq\underset{\Omega}{\int}|Dg|^{2},
\]
 for all $g\in W^{1,2}(\Omega,\mathcal{A}_{\mathcal{Q}}(\text{\ensuremath{\mathbb{R}}}^{n})$
whose trace on $\partial\Omega$ agrees with that of $f$. We shall
always assume $m\geq2$. 

\subsection{Frequency Function}

We recall the frequency function and its relatives in the context
of $\mathcal{Q}-$valued functions. We have the following Holder regularity
type theorem for Dir-minimizing functions:

\begin{thm}

\textsl{\label{Theorem 2.17} }\textup{({[}5{]}, Theorem} \textup{6.2)}\textsl{
}There are constants $\alpha=\alpha(m,\mathcal{Q})\in(0,1)$ and $C=C(m,n,\mathcal{Q\mathrm{,}\delta\mathit{\mathrm{)}}}$
with the following property. If $f:B_{1}(0)\subset\mathbb{R}^{m}\rightarrow\mathcal{A}_{\mathcal{Q}}\mathrm{(\mathbb{R}^{\mathit{n}})}$
is Dir-minimizing then 
\[
\underset{x\neq y\in\overline{B_{\delta}(0)}}{\sup}\frac{\mathcal{G\mathrm{(\mathit{f\mathrm{(\mathit{x\mathrm{),\mathit{f\mathrm{(\mathit{y}))}}}}}}}}}{|x-y|^{\alpha}}\leq CDir(f)^{\frac{1}{2}}
\]
 for all $0<\delta<1$. 

\textup{In light of Theorem (\ref{Theorem 2.17}) $|f|^{2}$ is continuous
on $B_{1}(0)$. Fix $0<\delta<1$. Then, according to Theorem (\ref{Theorem 2.17})
$f$ is continuous on $\overline{B_{\delta}(0)}$. Since $\mathcal{G}$
is a metric, by the triangle inequality: 
\[
||T|-|S||=|\mathcal{G}(T,\mathcal{Q}[[0]])-\mathcal{G}(S,\mathcal{Q}[[0]])|\leq\mathcal{G}(T,S).
\]
 So $|f|$ is the composition of $f$ with a Lipschitz function, which
implies that $|f|$ is continuous on $\overline{B_{\delta}(0)}$,
and so the same is true for $|f|^{2}$. This is true for all $0<\delta<1$
from which we deduce continuity on $B_{1}(0).$ Thus, both $H_{f}$
and $\overline{H}_{f}$ are well defined for all $r\in(0,1)$. If
furthermore $H_{f}(r)>0$ for all $r$ then 
\[
I_{f}(r)\coloneqq\frac{rD_{f}(r)}{H_{f}(r)}
\]
 is well defined and is called the frequency function of $f$. When
there is no ambiguity, we shall omit the subscript $f$. }

\end{thm}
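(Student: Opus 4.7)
The plan is to follow the classical strategy going back to Almgren and carried out in detail in \cite{1}: (a) establish Almgren's monotonicity formula for the frequency function centered at arbitrary points $x_0\in\overline{B_\delta(0)}$, (b) convert monotonicity into a uniform Morrey--Campanato decay of $f$ on every ball $B_r(x_0)\subset B_1(0)$ with $r<1-\delta$, and (c) conclude Hölder regularity via a metric Campanato embedding. Since the estimate is interior, with constant depending on $\delta$, working at shifted centers is essential; the Hölder seminorm on $\overline{B_\delta(0)}$ will ultimately be controlled by the largest scale one can use, namely $1-\delta$.

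The first and central step is the monotonicity of the shifted frequency $I_{f,x_0}(r):=rD_f(B_r(x_0))/H_f(\partial B_r(x_0))$. This is obtained from two first-variation identities: \emph{outer variations} of the form $\sum_i[[P_i]]\mapsto\sum_i[[P_i+\varepsilon\eta(P_i)]]$, which force $D_f(B_r(x_0))$ to equal a boundary flux of $f$ against its outward normal derivative, and \emph{inner variations} $x\mapsto x+\varepsilon X(x)$, which produce a Pohozaev-type identity linking $D_f'(r)$ to the squared normal derivative of $f$ on $\partial B_r(x_0)$. A Cauchy--Schwarz step on $\partial B_r(x_0)$ then gives $(\log I_{f,x_0})'\geq 0$ a.e. Next, a universal lower bound $\alpha=\alpha(m,\mathcal{Q})>0$ on $\lim_{r\to 0^+}I_{f,x_0}(r)$ at any nontrivial point, coming from a classification of homogeneous tangent maps which excludes degrees below $\alpha$, combined with monotonicity yields the scale-invariant inequality $H_f(\partial B_r(x_0))/r^{m-1}\leq\bigl(H_f(\partial B_{1-\delta}(x_0))/(1-\delta)^{m-1}\bigr)\bigl(r/(1-\delta)\bigr)^{2\alpha}$. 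Integrating in $r$ produces a Campanato-type estimate of the form $\int_{B_r(x_0)}\mathcal{G}(f,T_{x_0,r})^{2}\leq Cr^{m+2\alpha}Dir(f)$ for an appropriate $T_{x_0,r}\in\mathcal{A}_\mathcal{Q}(\mathbb{R}^n)$, and the metric Campanato embedding then yields the claimed Hölder bound with the stated dependence on $\delta$ and $Dir(f)$.

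The main obstacle is making the outer variations rigorous: because $\mathcal{A}_\mathcal{Q}(\mathbb{R}^n)$ is not a vector space, they must be defined via measurable selections of the $\mathcal{Q}$ sheets, and one must check that $\sum_i[[P_i+\varepsilon\eta(P_i)]]$ is an admissible $W^{1,2}$ competitor with the correct trace on $\partial B_r(x_0)$, which requires the selection and Lipschitz-extension machinery of \cite{1}. A second and genuinely hard point is the universal positivity of $\alpha(m,\mathcal{Q})$: it depends on a nonexistence/classification result for homogeneous Dir-minimizers of low frequency, for which there is no elementary bypass, and this is precisely where the dependence of the Hölder exponent on $m$ and $\mathcal{Q}$ originates. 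Once the theorem is granted, the continuity of $|f|^{2}$ used in the subsequent discussion follows immediately from the triangle inequality for $\mathcal{G}$, exactly as the author observes.
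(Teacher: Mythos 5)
This theorem is not proved in the paper at all; it is imported verbatim from \cite{1} (Theorem 6.2 there), and the text following it is merely the derivation of a consequence (continuity of $|f|^2$, hence well-definedness of $H_f$, $\overline{H}_f$, $I_f$). So there is no ``paper's proof'' to compare your sketch against. Still, your sketch does not reconstruct the argument of \cite{1}, and it contains a genuine circularity worth flagging.

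The route in \cite{1} is not a frequency-function argument at all: the H\"older estimate of Theorem 6.2 there is obtained \emph{before} the frequency function is ever introduced (the frequency appears only in the next chapter, as Theorem 7.5, stated here as Theorem \ref{Theorem 3.2}). The actual mechanism is a direct decay of the Dirichlet energy by comparison. One combines the coarea-type identity $\frac{d}{dr}\mathrm{Dir}(f,B_r)=\mathrm{Dir}(g_r,\partial B_r)+\int_{\partial B_r}\sum_i|\partial_\nu f_i|^2$ with the comparison estimate of Proposition \ref{Proposition 3.6} (i.e.\ \cite{1} Prop.~6.3), $\mathrm{Dir}(f,B_r)\le c\,r\,\mathrm{Dir}(g_r,\partial B_r)$ with $c=\mathcal{Q}$ when $m=2$ and $c<\frac{1}{m-2}$ when $m>2$, to get the differential inequality $\frac{1}{c\,r}\mathrm{Dir}(f,B_r)\le\frac{d}{dr}\mathrm{Dir}(f,B_r)$. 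Integrating yields $\mathrm{Dir}(f,B_r)\le C\,r^{m-2+2\alpha}$ with $\alpha=\tfrac12(\tfrac1c-(m-2))>0$, and the metric Campanato/Morrey embedding finishes the proof. This is both shorter and logically prior to any statement about the frequency.

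The gap in your version is the step ``a universal lower bound $\alpha(m,\mathcal{Q})>0$ on $\lim_{r\to0^+}I_{f,x_0}(r)$ coming from a classification of homogeneous tangent maps which excludes degrees below $\alpha$.'' Such a classification is available only in dimension $m=2$ (Proposition 8.2 of \cite{1}, invoked in Example \ref{Example 4.6} of the present paper), and even there it is proved \emph{after} the H\"older theorem; in higher dimensions no classification of low-frequency homogeneous Dir-minimizers exists, and the bound $\alpha\ge\beta_m$ (Corollary \ref{Corollary 4.8}) is itself a consequence of the energy-decay estimates. Moreover, making sense of ``tangent maps'' requires compactness of blow-up sequences, which in the $\mathcal{Q}$-valued setting is exactly what the H\"older estimate provides. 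So as written the argument assumes what it sets out to prove. If you want to keep the frequency-function viewpoint, you must instead extract the exponent directly from the comparison constant $c(m)$ (or $c=\mathcal{Q}$ when $m=2$), not from a tangent-map classification.

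Your final remark---that continuity of $|f|^2$ then follows from the Lipschitz bound $\bigl||T|-|S|\bigr|\le\mathcal{G}(T,S)$---is correct and is exactly what the paper does after quoting the theorem.
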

\begin{doublespace}

\section{\label{Section 4} Proof of Main Results}
\end{doublespace}

In this section we give a proof of Proposition (\ref{Proposition 1.2})
and Theorem (\ref{Theorem 1.2}). The following identities will play
a crucial role in the study of convexity of the frequency function
and its relatives
\begin{prop}
\textsl{\label{Proposition 3.1} }\textup{({[}5{]}, Proposition 5.2)}\textsl{
}Let $f:B_{R}(x)\subset\mathbb{R^{\mathit{m}}\rightarrow\mathcal{A_{Q}\mathrm{(\mathbb{R}^{\mathit{n}})}}}$
be Dir-minimizing.

Then for a.e. $0<r<R$ we have: (i) $(m-2)$$\underset{B_{r}(x)}{\int}|Df|^{2}dx=r\underset{\partial B_{r}(x)}{\int}|Df|^{2}d\sigma-2r\underset{\partial B_{r}(x)}{\int}\stackrel[i=1]{\mathcal{Q}}{\sum}|\partial_{\nu}f_{i}|^{2}d\sigma$. 

(ii) $\underset{B_{r}(x)}{\int}|Df|^{2}dx=\underset{\partial B_{r}(x)}{\int}\stackrel[i=1]{\mathcal{Q}}{\sum}(\partial_{\nu}f_{i})\cdot f_{i}d\sigma$. 
\end{prop}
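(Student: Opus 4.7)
The plan is to derive both identities from the stationarity of the Dirichlet energy under two standard families of compactly supported variations of a Dir-minimizer --- the \emph{outer} and \emph{inner} variations developed in \cite{1}. Since $f$ is Dir-minimizing, any admissible family $\{f_\epsilon\}$ with $f_0 = f$ and $f_\epsilon = f$ outside a fixed compact subset of $B_R(x)$ must satisfy $\frac{d}{d\epsilon}\big|_{\epsilon=0}\mathrm{Dir}(f_\epsilon) = 0$. The boundary identities (i) and (ii) are obtained by choosing such families indexed by a radial cutoff $\phi$ and then letting $\phi$ degenerate to a sharp characteristic function, with the coarea formula converting the resulting distributional derivative into a surface integral.

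For (ii), I would take the outer variation $f_\epsilon(y) = \sum_{i=1}^{\mathcal{Q}} [[(1+\epsilon\phi(y))f_i(y)]]$, where $\phi \in C^\infty_c(B_R(x))$ is a smooth radial cutoff approximating $\chi_{B_r(x)}$ from above; this is admissible because multiplication by the single-valued scalar $1+\epsilon\phi$ acts sheet by sheet. Expanding $|Df_\epsilon|^2$ to first order in $\epsilon$ via $Df_{\epsilon,i} = (1+\epsilon\phi)Df_i + \epsilon f_i\otimes\nabla\phi$ and setting the linear term's integral to zero gives
$$\int \phi\,|Df|^2\,dy + \sum_{i=1}^{\mathcal{Q}}\int f_i\cdot (Df_i\,\nabla\phi)\,dy = 0.$$
As $\phi \nearrow \chi_{B_r(x)}$ the first term tends to $\int_{B_r(x)}|Df|^2$, while $\nabla\phi$ concentrates as $-\nu\,d\sigma$ on $\partial B_r(x)$, turning the second term into $-\int_{\partial B_r(x)}\sum_i f_i\cdot\partial_\nu f_i\,d\sigma$; this yields (ii) at every $r$ in the full-measure set where $\rho \mapsto \int_{B_\rho(x)}|Df|^2$ is differentiable.

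For (i), I would take the inner variation $f_\epsilon(y) = \sum_{i=1}^{\mathcal{Q}} [[f_i(y+\epsilon X(y))]]$ with the radial field $X(y) = \phi(|y-x|)(y-x)$, admissible because reparametrizing the domain commutes with the multivalued structure. A change of variables and expansion of $\det(I+\epsilon DX) = 1 + \epsilon\,\mathrm{div}\,X + O(\epsilon^2)$ produce the first-variation identity
$$\int\Big(|Df|^2\,\mathrm{div}\,X - 2\sum_{i,\alpha,\beta}\partial_\alpha f_i\cdot\partial_\beta f_i\,\partial_\alpha X^\beta\Big)\,dy = 0.$$
For our radial $X$ one computes $\mathrm{div}\,X = m\phi + |y-x|\phi'$ and $\sum_{\alpha,\beta}\partial_\alpha f_i\cdot\partial_\beta f_i\,\partial_\alpha X^\beta = \phi\,|Df_i|^2 + |y-x|\phi'\,|\partial_\nu f_i|^2$, so that letting $\phi \to \chi_{[0,r]}$ (in which case $\phi'$ concentrates as $-\delta_r$ in radial coordinates) yields precisely $(m-2)\int_{B_r(x)}|Df|^2 = r\int_{\partial B_r(x)}|Df|^2\,d\sigma - 2r\int_{\partial B_r(x)}\sum_i|\partial_\nu f_i|^2\,d\sigma$.

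The principal obstacle is not the algebra --- which is parallel to the scalar Pohozaev computation --- but its legitimacy in the $\mathcal{Q}$-valued framework: one cannot literally add or differentiate $\mathcal{Q}$-valued functions, and the $\epsilon$-expansions of $|Df_\epsilon|^2$ above must be derived from the chain-rule identities for the approximate differential $Df$ established in \cite{1}. Once this is in place the identities follow by the limiting arguments sketched above, which are valid precisely for a.e. $r$ by Lebesgue differentiation applied to the monotone function $\rho \mapsto \int_{B_\rho(x)}|Df|^2$.
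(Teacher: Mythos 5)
The paper does not prove Proposition \ref{Proposition 3.1}; it is quoted verbatim as Proposition 5.2 of \cite{1} (De Lellis--Spadaro) and used as a black box, so there is no internal proof to compare against. Your sketch, however, is precisely the argument given in \cite{1}: identity (ii) comes from the outer variation $f_\epsilon=\sum_i[[(1+\epsilon\phi)f_i]]$ and identity (i) from the inner (domain) variation $f_\epsilon=\sum_i[[f_i(y+\epsilon X(y))]]$ with a radial vector field, followed by sharpening the cutoff $\phi$ to a characteristic function and applying the coarea formula. Your algebra is correct: for the radial field $X(y)=\phi(|y-x|)(y-x)$ one indeed gets $\operatorname{div}X=m\phi+\rho\phi'$ and $\sum_{\alpha\beta}\partial_\alpha f_i\cdot\partial_\beta f_i\,\partial_\alpha X^\beta=\phi|Df_i|^2+\rho\phi'|\partial_\nu f_i|^2$, so that the first-variation identity collapses to $(m-2)\int_{B_r}|Df|^2=r\int_{\partial B_r}|Df|^2-2r\int_{\partial B_r}\sum_i|\partial_\nu f_i|^2$ for a.e.\ $r$; and the outer variation reduces, after dividing the linear-in-$\epsilon$ term by $2$, to $\int\phi|Df|^2+\sum_i\int f_i\cdot(Df_i\nabla\phi)=0$, whence (ii). The one point to make more than a caveat is the admissibility of the $\epsilon$-expansions for the approximate differential $Df$ and the justification that the inner and outer first-variation formulas hold for $W^{1,2}$ $\mathcal{Q}$-valued maps under merely Lipschitz (rather than smooth) test data; this is exactly Propositions 3.1--3.2 of \cite{1}, which you correctly flag as the required input. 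In short, you have reconstructed the source's proof rather than found an alternative, and the paper itself contributes nothing further here.
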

Our starting point is the following theorem
\begin{thm}
\textup{({[}5{]}, Theorem 7.5)}\textsl{ \label{Theorem 3.2} }Let
$f:B_{1}(0)\subset\mathbb{R^{\mathit{m}}}\rightarrow\mathcal{A}_{\mathcal{Q}}(\text{\ensuremath{\mathbb{R}}}^{n})$
be Dir minimizing. Then:
\end{thm}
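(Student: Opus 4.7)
The plan is to reproduce, in the $\mathcal{Q}$-valued setting, the classical Almgren monotonicity package: explicit derivative formulas for $H$ and $D$ together with the non-decreasing monotonicity of the frequency function $I(r) = rD(r)/H(r)$. Proposition \ref{Proposition 3.1} is the key ingredient, playing the role that the first variation identities play in the single-valued case.

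First I would differentiate $H$. In polar coordinates, $H(r) = r^{m-1} \int_{S^{m-1}} |f(r\omega)|^2\, d\sigma(\omega)$; Theorem \ref{Theorem 2.17} ensures that $|f|^2$ is continuous on $B_1(0)$, and the Sobolev theory for $\mathcal{Q}$-valued maps gives meaning to its radial derivative on a.e. sphere. Differentiating under the integral and unfolding back to $\partial B_r$ yields
\[
H'(r) = \frac{m-1}{r} H(r) + 2 \int_{\partial B_r} \sum_{i=1}^{\mathcal{Q}} f_i \cdot \partial_\nu f_i \, d\sigma.
\]
Identifying the boundary integral with $D(r)$ via Proposition \ref{Proposition 3.1}(ii) gives the closed formula $H'(r) = \frac{m-1}{r} H(r) + 2D(r)$, and in particular establishes the absolute continuity (and $C^1$ regularity) of $H$.

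Second, by the coarea formula $D(r) = \int_0^r \int_{\partial B_s} |Df|^2\, d\sigma\, ds$, so $D$ is absolutely continuous with $D'(r) = \int_{\partial B_r} |Df|^2\, d\sigma$ for a.e. $r$. Substituting Proposition \ref{Proposition 3.1}(i) produces
\[
D'(r) = \frac{m-2}{r} D(r) + 2 \int_{\partial B_r} \sum_{i=1}^{\mathcal{Q}} |\partial_\nu f_i|^2 \, d\sigma.
\]
Third, logarithmic differentiation of $I = rD/H$ and substitution of the two formulas above lead to
\[
\frac{I'(r)}{I(r)} = \frac{2}{D(r)} \int_{\partial B_r} \sum_i |\partial_\nu f_i|^2 \, d\sigma - \frac{2 D(r)}{H(r)},
\]
and Cauchy--Schwarz applied to the representation $D(r) = \int_{\partial B_r} \sum_i f_i \cdot \partial_\nu f_i\, d\sigma$ from Proposition \ref{Proposition 3.1}(ii) gives $D(r)^2 \le H(r) \int_{\partial B_r} \sum_i |\partial_\nu f_i|^2\, d\sigma$, hence $I'(r) \ge 0$.

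The main obstacle is the first step: justifying differentiation under the integral in the $\mathcal{Q}$-valued framework requires that $|f|^2$ be radially Sobolev and that the approximate differential $Df$ on almost every sphere admit a well-defined normal component $\partial_\nu f_i$ matching the slice-trace produced by the coarea theorem. Once this is in place, Steps 2 and 3 run as in the single-valued Almgren argument, with Proposition \ref{Proposition 3.1} doing the geometric work that the first variation identities do in the classical case.
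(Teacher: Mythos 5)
The paper does not prove Theorem \ref{Theorem 3.2}; it is quoted verbatim from De Lellis--Spadaro (\cite{1}, Theorem 7.5) and used as a black box. So there is no internal proof to compare against. That said, your sketch reproduces the standard Almgren-type argument that \cite{1} itself uses: derive the ODE $H'=\frac{m-1}{r}H+2D$ from the polar-coordinate expression for $H$ and the variational identity of Proposition \ref{Proposition 3.1}(ii), compute $D'$ from coarea and Proposition \ref{Proposition 3.1}(i), then log-differentiate $I=rD/H$ and close with Cauchy--Schwarz applied to $D=\int_{\partial B_r}\sum_i f_i\cdot\partial_\nu f_i$. The algebra you record, including the cancellation of the $\frac{1}{r}$-terms and the inequality $D^2\le H\int\sum_i|\partial_\nu f_i|^2$, is correct.

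Two caveats on completeness. First, the statement claims $H\in C^1(0,1)$ and that the identity holds for \emph{all} $r$, but the route through Proposition \ref{Proposition 3.1}(ii) only gives the identity a.e.; to upgrade, you need to first establish absolute continuity of $H$ (e.g.\ from the coarea expression and the Sobolev slicing theory for $\mathcal{Q}$-valued maps), and then observe that $\frac{m-1}{r}H(r)+2D(r)$ is continuous, so the a.e.\ equality combined with the fundamental theorem of calculus forces $H\in C^1$ with the identity everywhere. Second, you correctly flag the main technical burden --- justifying differentiation under the integral and giving meaning to $\partial_\nu f_i$ on a.e.\ slice via the $W^{1,2}$ structure of $\mathcal{Q}$-valued maps --- but this is not a peripheral detail: it is precisely the content that makes the $\mathcal{Q}$-valued version nontrivial compared to the single-valued Almgren computation, and a complete proof must carry it out (as \cite{1} does via their slicing and trace theory). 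Your outline is a faithful roadmap; it is not yet a proof without that input.
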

\textit{(a) $H\in C^{1}(0,1)$ and the following identity holds for
all $r\in(0,1):$ }

\textit{
\begin{equation}
H'(r)=\frac{m-1}{r}H(r)+2D(r).\label{eq:3.2}
\end{equation}
}

\textit{(b) If $H(r)>0$ then $I(r)$ is absolutely continuous and
non decreasing. In particular, $I'(r)\geq0$ for a.e. $r$. }

Statement (b) is known as Almgren's monotonicity formula. Since $D$
is absolutely continuous, it is a.e. differentiable. Therefore, in
view of equation (\ref{eq:3.2}), we see that $H'$ is a.e. differentiable.
Otherwise put, the second derivative of $H$ exists a.e. The regularity
properties of $H$ clearly apply for $\overline{H}$ as well. With
the aid of Almgren's monotonicity formula we are able to extend Agmon's
convexity result {[}1{]} in the context of $\mathcal{Q}-$valued
functions. This method of proof differs from Agmon's original approach
for real valued harmonic functions, which involves ODEs on Banach
spaces. 

\textit{Proof of Proposition 1.2. }In light of the above discussion
it is clear that $a$ is $C^{1}(-\infty,0)$ and that $a''$ exists
almost everywhere. For (i), note that 
\[
C_{m}\overline{H}'(r)=(\frac{1}{r^{m-1}}H(r))'=\frac{1}{r^{m-1}}H'(r)-(m-1)r^{-m}H(r)
\]

\[
=\frac{1}{r^{m-1}}(\frac{m-1}{r}H(r)+2D(r))-(m-1)\frac{1}{r^{m}}H(r)=\frac{2D(r)}{r^{m-1}}\geq0,
\]

Where the second equality is due to equation (\ref{eq:3.2}). So

\begin{equation}
\overline{H}'(r)=\frac{2D(r)}{C_{m}r^{m-1}}\geq0.\label{eq:7}
\end{equation}

Therefore, for all $t\in(-\infty,0)$ 

\begin{equation}
a'(t)=\log(\overline{H}(e^{t}))'=\frac{\overline{H}'(e^{t})e^{t}}{\overline{H}(e^{t})}\geq0.\label{eq:6-1}
\end{equation}
For (ii), start by noting that 
\[
a(t)=\log(\overline{H}(e^{t}))=\log(H(e^{t}))-\log(C_{m}e^{t(m-1)})
\]

\[
=\log(H(e^{t}))-\log(C_{m})-(m-1)t.
\]
Therefore, to prove $a''(t)\geq0$ it suffices to prove $(\log(H(e^{t})))''\geq0$.
By Theorem (\ref{Theorem 3.2}), we have that $I'(r)\geq0$ for a.e.
$0<r<1.$ To spare some space, all equalities and inequalities from
now on should be interpreted up to a null set. By virtue of equation
(\ref{eq:3.2})
\[
0\leq I'(r)=(\frac{rD(r)}{H(r)})'=(\frac{D(r)}{r^{m-2}\overline{H}(r)})'
\]

\[
=\frac{1}{2}(\frac{r(H'(r)-(\frac{m-1}{r})H(r))}{H(r)})'
\]

\[
=\frac{1}{2}(\frac{rH'(r)-(m-1)H(r)}{H(r)})'
\]

\[
=\frac{1}{2}(\frac{rH'(r)}{H(r)})'=\frac{1}{2}(\frac{(H'(r)+rH''(r))H(r)-r(H'(r))^{2}}{H^{2}(r)}).
\]

Thus, we get 
\begin{equation}
(H'(r)+rH''(r))H(r)-r(H'(r))^{2}\geq0.\label{eq:5}
\end{equation}
 On the other hand by a straightforward calculation:

\begin{equation}
e^{-t}(\log(H(e^{t})))''=(H'(e^{t})+e^{t}H''(e^{t}))H(e^{t})-e^{t}(H'(e^{t}))^{2}.\label{eq:6}
\end{equation}
Combining inequality (\ref{eq:5}) with equation (\ref{eq:6}) we
arrive at $e^{-t}(\log(H(e^{t})))''\geq0$ which is the same as $(\log(H(e^{t})))''\geq0$.
We are left to explain why $a$ is convex. It is classical that a
continuously differentiable function is convex iff its derivative
is non decreasing, and so our task reduces to showing that $a'$ is
non decreasing. By equations (\ref{eq:6-1}) and (\ref{eq:7}) we
get 
\[
a'(t)=\frac{\overline{H}'(e^{t})e^{t}}{\overline{H}(e^{t})}=\frac{2D(e^{t})}{C_{m}e^{t(m-2)}\overline{H}(e^{t})}.
\]
Since $D(e^{t})$ is a composition of an absolutely continuous function
with a non decreasing smooth function, it is absolutely continuous.
In addition, $\frac{1}{C_{m}e^{t(m-2)}\overline{H}(e^{t})}$ is differentiable.
So $a'(t)$ is absolutely continuous function on any closed sub interval
of $(-\infty,0)$, as a product of such function. Therefore, the fundamental
theorem of calculus is applicable: if $t_{1},t_{2}\in(-\infty,0),t_{1}<t_{2}$
then 
\[
a'(t_{2})-a'(t_{1})=\stackrel[t_{1}]{t_{2}}{\int}a''(t)dt\geq0.
\]
 
\begin{flushright}
$\square$
\par\end{flushright}
\begin{rem}
We draw the reader's attention to a somewhat delicate point, which
will be also relevant in what will come next . The implication ``non
negative derivative a.e.$\Rightarrow$non decreasing'' is not true
in general. In Proposition (\ref{Proposition 1.2}) we employed the
fact that the first derivative of $a$ is absolutely continuous in
order to deduce that it is convex. The absolute continuity of the
derivatives is a consequence of equation (\ref{eq:3.2}). Hence, we
implicitly relied here on the Dir-minimization property. In the case
of 1 valued harmonic functions, this technicality is not created because
all functions involved are smooth. To the best of our knowledge, improved
regularity for the frequency function and its relatives in the multi
valued settings is still an open problem. 
\end{rem}
The inequality ``$\overline{H}''(r)\geq0$ a.e.'' is not true in
general, as witnessed by the counterexample in section (\ref{Section 5}).
Nevertheless, we are still able to obtain a convexity result by reducing
the power of the normalization of $H$. More precisely we observe
the weaker
\begin{prop}
\label{Proposition 4.4} Let $f:B_{1}(0)\subset\mathbb{R^{\mathit{m}}}\rightarrow\mathcal{A}_{\mathcal{Q}}(\text{\ensuremath{\mathbb{R}}}^{n})$
be Dir minimizing. Then 

(i) $(r\overline{H}(r))'\geq0$ for all $r\in(0,1)$. (ii) $(r\overline{H}(r))''\geq0$
for a.e. $r\in(0,1)$. Furthermore $r\mapsto r\overline{H}(r)$ is
convex. 
\end{prop}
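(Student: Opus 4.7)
The plan is to set $G(r) := r\overline{H}(r) = H(r)/(C_m r^{m-2})$ and to reduce both derivative inequalities to the identity (3.2) and to Almgren's monotonicity $I'\geq 0$ from Theorem 3.2(b) (together with the trivial observation $I\geq 0$, since $D,H\geq 0$).

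For part (i), the plan is to differentiate directly: $G'(r) = \overline{H}(r) + r\overline{H}'(r)$, and then substitute the expression $\overline{H}'(r) = 2D(r)/(C_m r^{m-1})$ that was already derived in the proof of Proposition 1.2 (via (3.2)). This gives
\[
G'(r) = \overline{H}(r) + \frac{2D(r)}{C_m r^{m-2}},
\]
which is a sum of two nonnegative terms and is valid everywhere on $(0,1)$ since both $\overline{H}$ and $D$ are defined on that interval.

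For part (ii), the cleanest route is logarithmic differentiation. Assuming first that $H(r)>0$ (so that $I$ is defined), write $\log G = \log H - \log C_m - (m-2)\log r$, and use $H'/H = (m-1)/r + 2D/H = (m-1)/r + 2I/r$ from (3.2) to obtain
\[
\frac{G'(r)}{G(r)} = \frac{1 + 2I(r)}{r}.
\]
Differentiating once more (which is legitimate a.e., since $I$ is absolutely continuous by Theorem 3.2(b)) yields, after a short calculation,
\[
G''(r) = \frac{2G(r)}{r^{2}}\bigl[\,I(r)(1+2I(r)) + r\,I'(r)\bigr].
\]
Since $G\geq 0$, $I\geq 0$, and $I'\geq 0$ a.e., this gives $G''\geq 0$ a.e. The degenerate case $H\equiv 0$ on some subinterval is handled separately (there $G\equiv 0$ and the conclusion is trivial); more carefully, one treats the complement of $\{H=0\}$ component by component.

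Finally, for the convexity statement one cannot argue directly from $G''\geq 0$ a.e., and this is the main technical point, exactly parallel to Remark in the proof of Proposition 1.2. The plan is to show $G'$ is absolutely continuous on every closed subinterval of $(0,1)$: from the formula in part (i), $G'(r) = \overline{H}(r) + 2D(r)/(C_m r^{m-2})$, and both $\overline{H}$ (which is $C^1$ by Theorem 3.2(a)) and $D$ (which is absolutely continuous) are well behaved, while $1/r^{m-2}$ is smooth away from $0$. Hence $G'$ is absolutely continuous on $[r_1,r_2]\subset(0,1)$, and the fundamental theorem of calculus gives $G'(r_2) - G'(r_1) = \int_{r_1}^{r_2} G''(s)\,ds \geq 0$, so $G'$ is nondecreasing and $G$ is convex. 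The only subtle step is justifying this absolute continuity, which rests on (3.2) and is precisely the place where the Dir-minimizing hypothesis is implicitly used.
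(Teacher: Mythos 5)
Your proof is correct, and part (i) and the final convexity argument match the paper essentially verbatim. For part (ii), however, you take a genuinely different route. The paper does not invoke Almgren's monotonicity at all at this stage: it computes $\overline{H}''$ from \eqref{eq:3.4}, observes that $(r\overline{H})''\geq 0$ is equivalent to $rD'(r)-(m-3)D(r)\geq 0$, and then bounds
\[
rD'(r)-(m-3)D(r)\ \geq\ rD'(r)-(m-2)D(r)\ =\ 2r\underset{\partial B_r}{\int}\sum_{i=1}^{\mathcal{Q}}|\partial_\nu f_i|^2\ \geq\ 0,
\]
using the variational identity of Proposition \ref{Proposition 3.1}(i) directly. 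You instead pass to $\log G$, arrive at the tidy formula $G''=\tfrac{2G}{r^2}\bigl[I(1+2I)+rI'\bigr]$, and conclude from $I\geq 0$ and the packaged Theorem \ref{Theorem 3.2}(b). Both are valid; your version is more conceptual and makes the role of the frequency transparent (and the factorisation $I(1+2I)+rI'$ is a nice identity), whereas the paper's is more elementary, relying only on the first-variation formula, and it sidesteps the need to case-split on the set $\{H=0\}$ because it never divides by $H$. Your handling of that degenerate case is nevertheless fine: since $H'=\tfrac{m-1}{r}H+2D\geq 0$, $H$ is nondecreasing, so $\{H=0\}$ is (at most) an initial interval on which $G\equiv 0$, and your argument applies on the complement.
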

\textit{Proof.} As for (i) we can in fact derive a stronger result,
namely $\overline{H}'(r)\geq0$ for all $r\in(0,1)$. We compute:
\[
\overline{H}'(r)=\frac{1}{C_{m}}(-(m-1)r^{-m}H(r)+r^{-(m-1)}H'(r))
\]
\begin{equation}
=\frac{r^{-(m-1)}}{C_{m}}(H'(r)-\frac{m-1}{r}H(r))=\frac{2D(r)r^{-(m-1)}}{C_{m}}\geq0,\label{eq:3.4}
\end{equation}

where the last equality is by equation (\ref{eq:3.2}). For (ii),
start by noting that equation (\ref{eq:3.4}) implies the following
equality for a.e. $r$ 

\textit{
\begin{equation}
\overline{H}''(r)=\frac{2}{C_{m}r^{m-1}}(D'(r)-\frac{(m-1)D(r)}{r}).\label{eq:3.4'}
\end{equation}
}

Gathering our calculations one readily checks that $(r\overline{H}(r))''\geq0\iff rD'(r)-(m-3)D(r)\geq0$.
Indeed 
\[
rD'(r)-(m-3)D(r)\geq rD'(r)-(m-2)D(r)
\]

\[
=r\underset{\partial B_{r}(0)}{\int}|Df|^{2}-(m-2)\underset{B_{r}(0)}{\int}|Df|^{2}=2r\underset{\partial B_{r}(0)}{\int}\stackrel[i=1]{\mathcal{Q}}{\sum}|\partial_{\nu}f_{i}|^{2}\geq0,
\]
 where the last equality is thanks to (i), (\ref{Proposition 3.1}).
The convexity of $r\mapsto r\overline{H}(r)$ follows by a similar
argument to the one demonstrated in Proposition (\ref{Proposition 1.2}).
\begin{flushright}
$\square$
\par\end{flushright}

It is clear that Proposition (\ref{Proposition 4.4}) implies in particular
that the same conclusion holds true for $H$ (and this can also be
derived directly from iterating equation (\ref{eq:3.2})).

We present now a proof of Theorem (\ref{Theorem 1.2}), including
a higher dimensional analog. The main ingredients of the proof are
the variational formulas provided by Proposition (\ref{Proposition 3.1})
and the following estimates 
\begin{prop}
\textsl{\label{Proposition 3.6} }\textup{({[}5{]}, Proposition 6.3)
}Let $f:B_{1}(0)\subset\mathbb{R^{\mathit{m}}}\rightarrow\mathcal{A}_{\mathcal{Q}}\mathrm{(\mathbb{R}^{\mathit{n}})}$
be Dir-minimizing and suppose that $g_{r}:=f|_{\partial B_{r}(0)}\in W^{1,2}(\partial B_{r}(0),\mathcal{A}_{\mathcal{Q}}\mathrm{(\mathbb{R}^{\mathit{n}}))}$
for a.e $0<r<1$. Then for a.e $r$: (i) If $m=2$, $\mathrm{Dir}(f,B_{r}(0))\leq\mathcal{Q}r\mathrm{Dir}(g_{r},\partial B_{r}(0))$.
(ii) If $m>2$, $\mathrm{Dir}(f,B_{r}(0))\leq c(m)r\mathrm{Dir}(g_{r},\partial B_{r}(0))$,
where $c(m)<\frac{1}{m-2}$.
\end{prop}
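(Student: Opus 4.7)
The plan is to build, for almost every $r$, a $\mathcal{Q}$-valued competitor $h : B_r(0) \to \mathcal{A}_{\mathcal{Q}}(\mathbb{R}^n)$ with $h|_{\partial B_r} = g_r$ and then invoke Dir-minimality of $f$ to get $\mathrm{Dir}(f, B_r) \leq \mathrm{Dir}(h, B_r)$. The construction of $h$ differs substantially between $m = 2$ and $m \geq 3$: in the first case $\partial B_r = S^1$ is not simply connected and strands can wind nontrivially around the origin, whereas for $m\geq 3$ simple connectedness of $\partial B_r$ permits a global single-valued selection.

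For $m=2$, I would first invoke the strand decomposition of $W^{1,2}$ $\mathcal{Q}$-valued maps on the circle: there exist positive integers $Q_1,\dots,Q_J$ with $\sum_j Q_j = \mathcal{Q}$ and $W^{1,2}$ maps $\bar{\gamma}_j$ defined on the $Q_j$-fold covers $\partial B_{rQ_j}$, whose pushdowns under conformal branched covers $\phi_j : B_{rQ_j}\to B_r$ of the form $\tilde z\mapsto c_j \tilde z^{Q_j}$ recover the strands of $g_r$. For each strand let $\bar{u}_j$ be the (single-valued) harmonic extension of $\bar{\gamma}_j$ to $B_{rQ_j}$; a Fourier series argument on $\partial B_{rQ_j}$ gives
\[
\mathrm{Dir}(\bar u_j, B_{rQ_j}) = 2\pi\sum_{k\neq 0}|k|\,|a_k^{(j)}|^2 \;\leq\; 2\pi\sum_{k\neq 0} k^2 |a_k^{(j)}|^2 = (rQ_j)\,\mathrm{Dir}(\bar\gamma_j,\partial B_{rQ_j}),
\]
since $|k|\leq k^2$ for nonzero integer $k$. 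Pushing $\bar u_j$ back down via $\phi_j$ produces a $\mathcal{Q}$-valued extension $h_j$ on $B_r$; conformal invariance of the two-dimensional Dirichlet energy yields $\mathrm{Dir}(\bar u_j, B_{rQ_j}) = Q_j\,\mathrm{Dir}(h_j, B_r)$ and $\mathrm{Dir}(\bar \gamma_j, \partial B_{rQ_j}) = Q_j\,\mathrm{Dir}(\gamma_j, \partial B_r)$, so $\mathrm{Dir}(h_j, B_r)\leq rQ_j\,\mathrm{Dir}(\gamma_j,\partial B_r)$. Summing over $j$ and using $Q_j\leq \mathcal{Q}$ gives $\mathrm{Dir}(h, B_r)\leq \mathcal{Q}r\,\mathrm{Dir}(g_r,\partial B_r)$.

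For $m\geq 3$, simple connectedness of $S^{m-1}$ permits a $W^{1,2}$ selection $g_r = \sum_i [[\gamma_i]]$ with $\gamma_i\in W^{1,2}(\partial B_r, \mathbb{R}^n)$ and $\sum_i \mathrm{Dir}(\gamma_i,\partial B_r) = \mathrm{Dir}(g_r,\partial B_r)$. Taking $h = \sum_i [[u_i]]$ with $u_i$ the ordinary harmonic extension of $\gamma_i$ and decomposing $\gamma_i$ into spherical harmonics, a direct calculation using the eigenvalue identity $\int_{S^{m-1}}|\nabla_\omega Y_k|^2 = k(k+m-2)\int |Y_k|^2$ gives, for each mode of degree $k\geq 1$,
\[
\mathrm{Dir}(\rho^k Y_k, B_r) = \frac{r}{k+m-2}\int_{\partial B_r}|\nabla_T Y_k|^2 d\sigma \;\leq\; \frac{r}{m-1}\int_{\partial B_r}|\nabla_T Y_k|^2 d\sigma.
\]
Summing over modes and over $i$ yields (ii) with $c(m)=\tfrac{1}{m-1} < \tfrac{1}{m-2}$.

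The main obstacle is the $\mathcal{Q}$-valued structural theory underlying both constructions. For $m=2$ one must rigorously justify the strand decomposition of a $W^{1,2}$ $\mathcal{Q}$-valued map on $S^1$ (including the existence of well-defined wrapping numbers and the additivity of the $W^{1,2}$ norm across strands) and the conformal pushdown of the Dirichlet energy, while for $m\geq 3$ one must invoke the nontrivial Almgren--De Lellis--Spadaro $W^{1,2}$ selection theorem on simply connected domains. Once these structural facts are in hand, the remaining ingredients are the explicit Fourier and spherical-harmonic computations above.
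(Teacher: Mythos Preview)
Note first that the paper itself does not prove this proposition: it is quoted from \cite{1} and used as a black box in the proof of Theorem~\ref{Theorem 1.2}. Any comparison is therefore with the argument in \cite{1}.

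For $m=2$ your sketch is essentially that argument: decompose $g_r$ into irreducible $Q_j$-cycles, unwind each to a single-valued $\bar\gamma_j$ on a $Q_j$-fold cover, extend harmonically, push down, and use $|k|\le k^2$ on the Fourier side. One bookkeeping correction: with your covering $\phi_j:B_{rQ_j}\to B_r$, $\tilde z\mapsto c_j\tilde z^{Q_j}$, the restriction to the boundary circles is a local \emph{isometry}, so in fact $\mathrm{Dir}(\bar\gamma_j,\partial B_{rQ_j})=\mathrm{Dir}(\gamma_j,\partial B_r)$ and, by $2$-dimensional conformal invariance, $\mathrm{Dir}(\bar u_j,B_{rQ_j})=\mathrm{Dir}(h_j,B_r)$, without the extra $Q_j$ factors you wrote. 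The $Q_j$ in the final bound enters only through the radius $rQ_j$ in the Fourier estimate, so your conclusion $\mathrm{Dir}(h_j,B_r)\le rQ_j\,\mathrm{Dir}(\gamma_j,\partial B_r)$ is nonetheless correct.

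For $m\ge 3$ there is a genuine gap. The ``$W^{1,2}$ selection theorem on simply connected domains'' you invoke does not exist: the $2$-valued map $z\mapsto [[\sqrt z\,]]+[[-\sqrt z\,]]$ is continuous and $W^{1,2}$ on the (simply connected) unit disk but admits no $W^{1,2}$ single-valued selection, since on a.e.\ circle $\partial B_\rho$ any such selection would restrict to a continuous square root of $z$, which cannot close up. Transplanting this example onto a coordinate cap of $S^{m-1}$ exhibits the same obstruction on $\partial B_r$ for every $m\ge 3$. Hence your spherical-harmonic computation, which presupposes a global decomposition $g_r=\sum_i[[\gamma_i]]$ with single-valued $\gamma_i\in W^{1,2}(\partial B_r,\mathbb{R}^n)$, cannot start. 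The proof in \cite{1} avoids sheet selection entirely: the competitor is built directly at the $\mathcal{A}_{\mathcal{Q}}$-level, starting from the $0$-homogeneous extension $x\mapsto g_r(rx/|x|)$ (which alone yields exactly $c=\tfrac{1}{m-2}$) and then obtaining the strict improvement $c(m)<\tfrac{1}{m-2}$ by a further radial construction that never unwinds the multi-valued structure of $g_r$.
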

Before going into the proof of Theorem (\ref{Theorem 1.2}), let us
heuristically explain why it is reasonable the expect the validity
of such a result through the following example.
\begin{example}
\label{Example 4.6} We recall that a function $f:B_{1}(0)\rightarrow\mathcal{A}_{\mathcal{Q}}\mathrm{(\mathbb{R}^{\mathit{n}})}$
is $\alpha-$homogeneous ($\alpha>0)$ if $\forall y\in B_{1},y\neq0:f(y)=|y|^{\alpha}f(\frac{y}{|y|})$.
Denote by $\eta:\mathcal{A}_{\mathcal{Q}}\mathrm{(\mathbb{R}^{\mathit{n}})}\rightarrow\mathbb{R}^{n}$
the center of mass map defined by 
\[
\eta(\stackrel[i=1]{\mathcal{Q}}{\sum}[[P_{i}]])\coloneqq\frac{\stackrel[i=1]{\mathcal{Q}}{\sum}P_{i}}{\mathcal{Q}}.
\]
We can derive an explicit formula for the $L^{2}$-growth function
of a continuous $\alpha-$homogeneous map:

\[
H(r)=\underset{\partial B_{r}}{\int}|f|^{2}d\sigma=r^{m-1}\underset{\partial B_{1}}{\int}|f|^{2}(ry)d\sigma
\]

\[
=r^{m-1}\underset{\partial B_{1}}{\int}\stackrel[i=1]{\mathcal{Q}}{\sum}|f_{i}|^{2}(ry)d\sigma=r^{2\alpha+m-1}\underset{\partial B_{1}}{\int}\stackrel[i=1]{\mathcal{Q}}{\sum}|f_{i}|^{2}(y)d\sigma=\kappa r^{2\alpha+m-1},
\]

for some constant $\kappa\geq0$. Hence $\overline{H}$ takes the
form $\overline{H}(r)=\kappa r^{2\alpha}$. Assume now that $m=2$,
$f:B_{1}(0)\rightarrow\mathcal{A}_{\mathcal{Q}}\mathrm{(\mathbb{R}^{\mathit{n}})}$
is a Dir-minimizing, nontrivial, $\alpha-$homogeneous map with $\eta\circ f=0$
(the simplest example of such a map is the 2-valued function $f:B_{1}(0)\rightarrow\mathcal{A}_{2}(\mathbb{R}^{2})$
defined by $f(z)=\underset{w^{2}=z}{\sum}[[w]]$. See Theorem (\ref{Theorem 5.1})).
It is proved in {[}5{]}, Proposition 8.2 that in this case necessarily
$\alpha=\frac{p}{q}\in\mathbb{Q}$ for some $q\leq\mathcal{Q}$. So
$\overline{H}(r)=\kappa r^{\frac{2p}{q}}$, which implies $\overline{H}(r^{\frac{q}{2}})=\kappa r^{p}$,
and the latter function is obviously absolutely monotonic. We are
thus lead to speculate that more generally, composing $\overline{H}$
with some suitable $\frac{1}{2}[\mathcal{Q}]$-power produces a function
which is more well behaved. Theorem (\ref{Theorem 1.2}) partially
confirms this speculation. 
\end{example}
\textit{Proof of Theorem \ref{Theorem 1.2}.} That $\overline{h}'_{\frac{\mathcal{Q}}{2}}(r)\geq0$
follows immediately from equation (\ref{eq:7}). Henceforth all equalities
and inequalities should be interpreted up to a null set. A direct
calculation gives:

\[
\overline{h}''_{N}(r)=N(N-1)r^{N-2}\overline{H}'(r^{N})+\overline{H}''(r^{N})N^{2}r^{2N-2}.
\]

Writing $\xi=r^{N}$, we see that 
\[
\overline{h}_{N}''(\xi)\geq0\Leftrightarrow(\frac{N-1}{N})\overline{H}'(\xi)+\overline{H}''(\xi)\xi\geq0.
\]
 Taking $N=\frac{\mathcal{Q}}{2}$ we get 
\[
\overline{h}_{\frac{\mathcal{Q}}{2}}''(\xi)\geq0\Leftrightarrow(\mathcal{Q}-2)\overline{H}'(\xi)+\mathcal{Q}\overline{H}''(\xi)\xi\geq0.
\]

Owing to equation (\ref{eq:7}) we can express $\overline{H}',\overline{H}''$
explicitly

\[
\overline{H}'(\xi)=\frac{2}{C\xi}\underset{B_{\xi}(0)}{\int}|Df|^{2}
\]

and

\[
\overline{H}''(\xi)=\frac{2}{C\xi}(\underset{\partial B_{\xi}(0)}{\int}|Df|^{2}-\frac{1}{\xi}\underset{B_{\xi}(0)}{\int}|Df|^{2}).
\]

We now have

\[
C\xi((\mathcal{Q}-2)\overline{H}'(\xi)+\mathcal{Q}\overline{H}''(\xi)\xi)
\]

\[
=2(\mathcal{Q}-2)\underset{B_{\xi}(0)}{\int}|Df|^{2}+2\mathcal{Q}\xi(\underset{\partial B_{\xi}(0)}{\int}|Df|^{2}-\frac{1}{\xi}\underset{B_{\xi}(0)}{\int}|Df|^{2})
\]

\begin{equation}
=2\mathcal{Q}\xi\underset{\partial B_{\xi}(0)}{\int}|Df|^{2}-4\underset{B_{\xi}(0)}{\int}|Df|^{2}.\label{eq:3.7}
\end{equation}

Proposition (\ref{Proposition 3.6}), (i) combined with Proposition
(\ref{Proposition 3.1}) yield the following estimate

\[
\underset{B_{\xi}(0)}{\int}|Df|^{2}dx\leq\xi\mathcal{Q}\mathrm{Dir}(g_{\xi},\partial B_{\xi}(0)).
\]

Therefore 
\[
2\underset{B_{\xi}(0)}{\int}|Df|^{2}\leq2\xi\mathcal{Q}\mathrm{Dir}(g_{\xi},\partial B_{\xi}(0))
\]

\[
=2\xi\mathcal{Q}(\underset{\partial B_{\xi}(0)}{\int}|Df|^{2}-\stackrel[i=1]{\mathcal{Q}}{\sum}|\partial_{\nu}f_{i}|^{2}d\sigma)=\mathcal{Q}\xi\underset{\partial B_{\xi}(0)}{\int}|Df|^{2},
\]

where the last equality is due to (\ref{Proposition 3.1}), (i). 

Combining the last inequality with equation (\ref{eq:3.7}) gives
$(\mathcal{Q}-2)\overline{H}'(\xi)+\mathcal{Q}\overline{H}''(\xi)\xi\geq0$,
as wanted. Finally, note that 
\[
\overline{h}'_{\frac{\mathcal{Q}}{2}}(r)=\frac{2D(r^{\frac{\mathcal{Q}}{2}})r^{-\frac{\mathcal{Q}}{2}(m-1)}}{C_{m}}r^{\frac{\mathcal{Q}}{2}-1}=\frac{2D(r^{\frac{\mathcal{Q}}{2}})r^{\frac{\mathcal{Q}}{2}(2-m)-1}}{C_{m}}.
\]
 Since $D(r^{\frac{\mathcal{Q}}{2}})$ is a composition of absolutely
continuous non decreasing functions, it is absolutely continuous.
In view of the previous equation we see that $\overline{h}'_{\frac{\mathcal{Q}}{2}}$
is absolutely continuous. Thus, proceeding as in Proposition (\ref{Proposition 1.2}),
it follows that $\overline{h}{}_{\frac{\mathcal{Q}}{2}}$ is convex.
\begin{flushright}
$\square$
\par\end{flushright}

We finish this section by stating an higher dimensional analog of
(\ref{Proposition 3.6}) 
\begin{thm}
\textsl{\label{Theorem 3.14} }Let $m>2$ and $f:B_{1}(0)\subset\mathbb{R}^{m}\rightarrow\mathcal{A}_{\mathcal{Q}}(\mathbb{R}^{\mathit{n}})$
be a Dir minimizing function. Suppose $f|_{\partial B_{r}}\in W^{1,2}(\partial B_{r},\mathcal{A}_{\mathcal{Q}}(\text{\ensuremath{\mathbb{R}}}^{n}))$
for a.e. $0<r<1$. Let $c(m)=\frac{1}{m-2}-\epsilon_{m}$ , $0<\epsilon_{m}<\frac{1}{m-2}$
be the constant obtained via proposition \ref{Proposition 3.6}. Let
$\alpha_{m}=\frac{1-\epsilon_{m}(m-2)}{\epsilon_{m}(m-2)^{2}}$. Then

(i) $\overline{h}'_{\frac{\alpha_{m}}{2},f}(r)\geq0$ for all $r\in(0,1)$.

(ii) $\overline{h}''_{\frac{\alpha_{m}}{2},f}(r)\geq0$ for a.e. $r\in(0,1)$.
\end{thm}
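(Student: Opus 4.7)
The plan is to follow the scheme of the proof of Theorem \ref{Theorem 1.2}, replacing the use of Proposition \ref{Proposition 3.6}(i) by its higher-dimensional counterpart Proposition \ref{Proposition 3.6}(ii). Part (i) of the theorem is immediate: equation \ref{eq:7} gives $\overline{H}'(r)\geq 0$ on $(0,1)$, and since $\overline{h}'_{N,f}(r) = N r^{N-1}\overline{H}'(r^N)$, positivity persists for every $N>0$, in particular for $N = \alpha_m/2$.

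For part (ii) I would compute $\overline{h}''_{N,f}$ directly, set $\xi = r^N$, and divide by the positive factor $N r^{N-2}$; this reduces the target to $(N-1)\overline{H}'(\xi) + N\xi \overline{H}''(\xi) \geq 0$ for a.e. $\xi$. Substituting the explicit expressions for $\overline{H}'$ and $\overline{H}''$ from equations \ref{eq:7} and \ref{eq:3.4'} and factoring out $\frac{2}{C_m \xi^{m-1}}$, the inequality simplifies to
\[
\xi D'(\xi) \geq \Big(m-2 + \tfrac{1}{N}\Big) D(\xi).
\]
Using Proposition \ref{Proposition 3.1}(i), which with $D'(\xi) = \int_{\partial B_\xi}|Df|^2$ can be rewritten as $\xi D'(\xi) - (m-2)D(\xi) = 2\xi \int_{\partial B_\xi}\sum_{i=1}^{\mathcal{Q}}|\partial_\nu f_i|^2\, d\sigma$, this is equivalent to the clean bound
\[
D(\xi) \leq 2N \xi \int_{\partial B_\xi}\sum_{i=1}^{\mathcal{Q}}|\partial_\nu f_i|^2\, d\sigma.
\]
For $N = \alpha_m/2$ the right-hand side becomes $\alpha_m \xi \int_{\partial B_\xi}\sum_i |\partial_\nu f_i|^2\, d\sigma$, and the whole task is to establish this normal-derivative estimate.

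To that end I would let $A = \int_{\partial B_\xi}\sum_i|\partial_\nu f_i|^2$ and $B = \int_{\partial B_\xi}|Df|^2$. The tangential-normal splitting on $\partial B_\xi$ gives $\mathrm{Dir}(g_\xi,\partial B_\xi) = B - A$, while Proposition \ref{Proposition 3.1}(i) reads $\xi B = (m-2)D + 2\xi A$, hence $\xi(B-A) = (m-2)D + \xi A$. Plugging this into the higher-dimensional estimate $D \leq c(m)\xi(B-A)$ from Proposition \ref{Proposition 3.6}(ii) yields $D\bigl[1 - c(m)(m-2)\bigr] \leq c(m)\xi A$. With $c(m) = \frac{1}{m-2} - \epsilon_m$ one verifies that $\frac{c(m)}{1 - c(m)(m-2)} = \frac{1 - \epsilon_m(m-2)}{\epsilon_m(m-2)^2} = \alpha_m$, and the required inequality $D \leq \alpha_m \xi A$ drops out exactly.

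The only real obstacle is bookkeeping: tracking the precise constant $m-2 + \frac{1}{N}$ arising from the substitution of \ref{eq:3.4'} and matching it against the constant produced by Proposition \ref{Proposition 3.6}(ii); the definition of $\alpha_m$ in the statement is engineered so that these quantities agree precisely when $N = \alpha_m/2$. If one additionally wanted convexity of $\overline{h}_{\alpha_m/2,f}$ (not part of the statement), it would follow by the argument used in Proposition \ref{Proposition 1.2}: the expression $\overline{h}'_{\alpha_m/2}(r) = \frac{2 D(r^{\alpha_m/2}) r^{\alpha_m(2-m)/2 - 1}}{C_m}$ presents the first derivative as a product of an absolutely continuous function with a smooth one, so the fundamental theorem of calculus upgrades the a.e. nonnegativity of $\overline{h}''_{\alpha_m/2}$ to monotonicity of $\overline{h}'_{\alpha_m/2}$.
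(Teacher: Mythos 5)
Your proposal is correct and follows exactly the route the paper intends: the paper's proof of Theorem \ref{Theorem 3.14} is literally the single remark that it is ``identical to that of Theorem \ref{Theorem 1.2}, using estimate (ii) in Proposition \ref{Proposition 3.6} instead of (i)'', and your argument is the faithfully carried-out version of that reduction, including the verification that $\frac{c(m)}{1-c(m)(m-2)}=\alpha_{m}$ so that $N=\alpha_{m}/2$ is the threshold exponent.
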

\begin{proof}
The proof is identical to that of Theorem (\ref{Theorem 1.2}), using
estimate (ii) in Proposition (\ref{Proposition 3.6}) instead of (i). 
\end{proof}
We can now conclude that\textsl{ }nontrivial Dir minimizing, $\alpha-$homogeneous
function have exponents $\alpha$ far away from $0$. 
\begin{cor}
\label{Corollary 4.8} There are constants $\beta_{m}>0$ with the
following property. Let $f:B_{1}(0)\subset\mathbb{R}^{m}\rightarrow\mathcal{A}_{\mathcal{Q}}(\mathbb{R}^{\mathit{n}})$
be a nontrivial Dir minimizing, $\alpha-$homogeneous function. Suppose
$f|_{\partial B_{r}}\in W^{1,2}(\partial B_{r},\mathcal{A}_{\mathcal{Q}}(\text{\ensuremath{\mathbb{R}}}^{n}))$
for a.e. $0<r<1$. Then $\alpha\geq\beta_{m}$. 
\end{cor}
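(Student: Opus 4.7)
The plan is to feed the $\alpha$-homogeneous ansatz of Example \ref{Example 4.6} directly into the convexity statements of Theorem \ref{Theorem 1.2} and Theorem \ref{Theorem 3.14} and read off the homogeneity exponent. Since for a nontrivial continuous $\alpha$-homogeneous map we have the explicit formula $\overline{H}_{f}(r)=\kappa r^{2\alpha}$ for some $\kappa\geq 0$ derived in Example \ref{Example 4.6}, the one-variable function $\overline{h}_{N,f}(r)=\kappa r^{2\alpha N}$ is a pure power of $r$, and its second derivative is a single monomial whose sign is entirely determined by $2\alpha N-1$. This turns the qualitative conclusion of Theorem \ref{Theorem 1.2} / Theorem \ref{Theorem 3.14} into the sharp inequality $\alpha\geq\tfrac{1}{2N}$, which is precisely a lower bound of the desired form.

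First I would verify that $\kappa>0$, so that the sign of $\overline{h}_{N,f}''$ is not vacuously obscured. Nontriviality of $f$ gives some $y_{0}\in\partial B_{1}(0)$ with $|f(y_{0})|>0$; since $|f|$ is continuous on $B_{1}(0)$ by Theorem \ref{Theorem 2.17}, it is strictly positive in a relatively open neighborhood of $y_{0}$ on $\partial B_{1}(0)$, whence $\kappa=\frac{1}{C_m}\int_{\partial B_{1}}|f|^{2}d\sigma>0$. In particular $H_{f}(r)>0$ for every $r\in(0,1)$, which also legitimizes using the regularity conclusions from the statements of Theorems \ref{Theorem 1.2} and \ref{Theorem 3.14}.

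Next I would split into two cases according to the dimension and set
\[
N_{m}=\begin{cases}\mathcal{Q}/2 & \text{if }m=2,\\ \alpha_{m}/2 & \text{if }m>2,\end{cases}\qquad \beta_{m}=\frac{1}{2N_{m}}.
\]
In both cases the relevant theorem applies to $f$ and gives $\overline{h}''_{N_{m},f}(r)\geq 0$ for a.e.\ $r\in(0,1)$. Plugging in $\overline{h}_{N_{m},f}(r)=\kappa r^{2\alpha N_{m}}$ and differentiating twice produces
\[
\overline{h}''_{N_{m},f}(r)=\kappa\,(2\alpha N_{m})(2\alpha N_{m}-1)\,r^{2\alpha N_{m}-2}.
\]
Since $\kappa>0$, $\alpha>0$, and $r^{2\alpha N_{m}-2}>0$ on $(0,1)$, the a.e.\ sign condition forces the scalar factor $2\alpha N_{m}-1$ to be nonnegative; otherwise $\overline{h}''_{N_{m},f}$ would be strictly negative on the whole interval. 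Hence $\alpha\geq\beta_{m}$.

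I do not anticipate a genuine obstacle: the only subtle point is making sure that for an $\alpha$-homogeneous Dir minimizer on $B_{1}(0)$ the hypotheses of Theorem \ref{Theorem 3.14} (respectively Theorem \ref{Theorem 1.2}) are actually invoked with a function satisfying $f|_{\partial B_{r}}\in W^{1,2}(\partial B_{r},\mathcal{A_{Q}}(\mathbb{R}^{n}))$ for a.e.\ $r$, but this is exactly the hypothesis built into the corollary. The proof is therefore a short, direct calculation leveraging the homogeneity to reduce a convexity statement about a composition to a lower bound on the exponent.
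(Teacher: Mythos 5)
Your proof is correct and follows essentially the same route as the paper: read off $\overline{H}_f(r)=\kappa r^{2\alpha}$ from Example \ref{Example 4.6}, compute $\overline{h}''_{N_m,f}(r)=\kappa\,(2\alpha N_m)(2\alpha N_m-1)r^{2\alpha N_m-2}$, and conclude $2\alpha N_m\geq 1$ from the a.e.\ nonnegativity furnished by Theorems \ref{Theorem 1.2} and \ref{Theorem 3.14}; the constants $\beta_2=1/\mathcal{Q}$ and $\beta_m=1/\alpha_m$ also match. One small imprecision: you justify $\kappa>0$ by invoking a point $y_0\in\partial B_1(0)$ with $|f(y_0)|>0$ and continuity of $|f|$ there, but Theorem \ref{Theorem 2.17} only gives continuity on the open ball; it is cleaner to note that nontriviality plus interior continuity gives $H_f(r_0)>0$ for some $r_0\in(0,1)$, and then $\alpha$-homogeneity forces $\kappa=H_f(r_0)/(C_m r_0^{2\alpha+m-1})>0$.
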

\begin{proof}
Put $\beta_{2}=\frac{1}{\mathcal{Q}}$ and $\beta_{m}=\frac{1}{\alpha_{m}},m>2$.
According to the computation performed in Example \ref{Example 4.6},
$\overline{H}(r)=\kappa r^{2\alpha}$ for some $\kappa>0$. According
to Theorem (\ref{Theorem 1.2}) and Theorem (\ref{Theorem 3.14})
we see that $0\leq\kappa\frac{\alpha}{\beta_{m}}(\frac{\alpha}{\beta_{m}}-1)r^{\frac{\alpha}{\beta_{m}}-2}$
for a.e. $r$, which in particular gives $\alpha\geq\beta_{m}$. 
\end{proof}
Note that in the specific case that $m=2$ and $\eta\circ f=0$, Corollary
(\ref{Corollary 4.8}) recovers a weaker form of Proposition 8.2,
{[}5{]} which was already mentioned in Example (\ref{Example 4.6}). 
\begin{doublespace}

\section{\label{Section 5} Counterexamples}
\end{doublespace}

The following theorem allows us to produce nontrivial examples of 

Dir-minimizing functions:
\begin{thm}
\textsl{\label{Theorem 5.1} }\textup{({[}2{]}, Theorem 2.20) }Let
$a\neq0,b\in\mathbb{R}$. Define $u:B_{1}(0)\subset\mathbb{R}^{2}\mathbb{\rightarrow}\mathcal{A_{\mathcal{Q}}}(\mathbb{R}^{2})$
by $u(z)=\underset{w^{\mathcal{Q}}=az+b}{\sum}[[w]]$. Then $u$ is
Dir-minimizing.

\textup{The original proof of Theorem (\ref{Theorem 5.1}) in {[}2{]}
is highly nontrivial and relies heavily on the theory of mass minimizing
currents. Spadaro ({[}8{]}, theorem 0.1) found an alternative simpler
proof to this result.  We start by demonstrating that Theorem (\ref{Theorem 1.1 })
cannot be naively extended to the $\mathcal{Q}-$valued setting.}
\end{thm}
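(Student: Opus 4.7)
My plan is to prove Theorem~\ref{Theorem 5.1} by exploiting the fact that the graph of $u$ is a one-dimensional complex analytic subvariety of $\mathbb{C}^{2}\cong\mathbb{R}^{4}$, hence area-minimizing by the K\"ahler calibration; I then transfer this area-minimality to Dir-minimality of $u$ via a pointwise inequality relating graph area to Dirichlet energy that becomes an equality precisely for holomorphic maps.

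Writing $z=x_{1}+ix_{2}$, $w=y_{1}+iy_{2}$, the set $\Sigma=\{(z,w)\in B_{1}(0)\times\mathbb{C}:w^{\mathcal{Q}}=az+b\}$ is the image of the holomorphic embedding $\zeta\mapsto((\zeta^{\mathcal{Q}}-b)/a,\zeta)$, and is therefore a smooth complex curve. Let $T_{u}$ denote the associated integer rectifiable $2$-current with the natural complex orientation, and let $\mathbb{M}$ denote mass. By Wirtinger's inequality $T_{u}$ is calibrated by the standard K\"ahler form $\omega=dx_{1}\wedge dx_{2}+dy_{1}\wedge dy_{2}$; since $\omega$ is closed, Stokes' theorem yields $\mathbb{M}(T)\geq T(\omega)=T_{u}(\omega)=\mathbb{M}(T_{u})$ for every integer rectifiable $2$-current $T$ in $B_{1}\times\mathbb{C}$ satisfying $\partial T=\partial T_{u}$.

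To a map $g\in W^{1,2}(B_{1},\mathcal{A}_{\mathcal{Q}}(\mathbb{R}^{2}))$ I attach, through its measurable selections as in \cite{1}, an integer rectifiable current $T_{g}$ carried by its graph, arranged so that $\partial T_{g}=\partial T_{u}$ whenever $g$ and $u$ have the same trace on $\partial B_{1}$. At a point of approximate differentiability of the $i$-th sheet, the area element equals $\sqrt{(1+s_{1}^{2})(1+s_{2}^{2})}$, where $s_{1},s_{2}$ are the singular values of $Dg_{i}$; expanding the product and using $(s_{1}^{2}-s_{2}^{2})^{2}\geq0$ yields
\[
\sqrt{(1+s_{1}^{2})(1+s_{2}^{2})}\leq 1+\tfrac{1}{2}(s_{1}^{2}+s_{2}^{2})=1+\tfrac{1}{2}|Dg_{i}|^{2},
\]
with equality if and only if $s_{1}=s_{2}$. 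Summing over sheets and integrating gives $\mathbb{M}(T_{g})\leq\mathcal{Q}|B_{1}|+\tfrac{1}{2}\mathrm{Dir}(g)$. Because $u$ is holomorphic on every branch, the Cauchy--Riemann equations force $s_{1}=s_{2}$ almost everywhere, and in fact $\mathbb{M}(T_{u})=\mathcal{Q}|B_{1}|+\tfrac{1}{2}\mathrm{Dir}(u)$. Chaining the calibration inequality with these two bounds, for any competitor $g$ sharing the trace of $u$,
\[
\mathcal{Q}|B_{1}|+\tfrac{1}{2}\mathrm{Dir}(u)=\mathbb{M}(T_{u})\leq\mathbb{M}(T_{g})\leq\mathcal{Q}|B_{1}|+\tfrac{1}{2}\mathrm{Dir}(g),
\]
so that $\mathrm{Dir}(u)\leq\mathrm{Dir}(g)$, proving Dir-minimality.

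The step I expect to absorb essentially all of the genuine work is the current-theoretic one: associating the current $T_{g}$ to an arbitrary $W^{1,2}$ (not merely Lipschitz) $\mathcal{Q}$-valued map, proving that $T_{g}$ is integer rectifiable, and checking that agreement of traces in the metric-valued Sobolev sense upgrades to the identity $\partial T_{g}=\partial T_{u}$ at the level of currents. This requires Lipschitz approximation of Sobolev $\mathcal{Q}$-valued maps, slicing by concentric spheres, and a careful treatment of the branching locus -- the fibre over $z=-b/a$, where all sheets of $u$ collide at $w=0$. Once this correspondence is in place, the Wirtinger calibration and the pointwise area/Dirichlet inequality are both elementary.
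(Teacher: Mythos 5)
The paper itself does not prove Theorem~\ref{Theorem 5.1}; it simply cites it from \cite{1} and remarks that the argument ``relies heavily on the theory of mass minimizing currents,'' which is exactly what your sketch supplies. Your approach --- view the graph $\Sigma=\{w^{\mathcal{Q}}=az+b\}$ as a holomorphic curve calibrated by the K\"ahler $2$-form, invoke Wirtinger and Stokes to get mass-minimality of the graph current $T_u$, and then compare mass to Dirichlet energy sheet by sheet via $\sqrt{(1+s_1^2)(1+s_2^2)}\le 1+\tfrac12(s_1^2+s_2^2)$ with equality exactly on the conformal/holomorphic locus --- is precisely the strategy used in \cite{1} for this statement. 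Your chain
\[
\mathcal{Q}|B_1|+\tfrac12\mathrm{Dir}(u)=\mathbb{M}(T_u)\le\mathbb{M}(T_g)\le\mathcal{Q}|B_1|+\tfrac12\mathrm{Dir}(g)
\]
is valid, and the algebraic inequality is sharp for $m=n=2$, so no scaling trick is strictly necessary at this point.

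Where the two treatments differ slightly: De Lellis--Spadaro organize the energy comparison through a Taylor expansion of the area integrand and a rescaling argument, replacing $u,g$ by $\lambda u,\lambda g$ and sending $\lambda\to 0$ (noting that $\lambda u$ is again of the form $\sum_{w^{\mathcal{Q}}=a'z+b'}[[w]]$, hence again calibrated). This costs a little more bookkeeping but is more robust; it only needs the second-order behaviour of the area element, which is what survives in higher codimension or when one works with Lipschitz approximants whose errors must be absorbed in the $o(\lambda^2)$. Your exact pointwise inequality shortcuts this in the present special case, which is a legitimate simplification. The genuinely hard steps are exactly the ones you flag: (a) defining the integer rectifiable push-forward current $T_g$ for a merely $W^{1,2}$ competitor $g$ (this is done in \cite{1} via the Lipschitz decomposition/approximation theorem for Sobolev $\mathcal{Q}$-valued maps, together with a convergence-of-graphs statement so that one can first prove the inequality for Lipschitz $g$ and pass to the limit); and (b) upgrading ``same trace in $W^{1,2}(\partial B_1,\mathcal{A}_{\mathcal{Q}})$'' to ``$\partial T_g=\partial T_u$'' as currents, which requires slicing and a boundary-rectifiability lemma. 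Both points are handled by the machinery built up in \cite{1} and are the reason the paper calls the proof ``highly nontrivial.'' Your outline is faithful to that reference once those two black boxes are opened.
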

\begin{example}
Define $f:B_{1}(0)\subset\mathbb{R}^{2}\mathbb{\rightarrow\mathcal{A}_{\mathrm{3}}\mathrm{(\mathbb{R^{\mathrm{2}}\mathrm{)}}}}$
by $f(z)=\underset{w^{3}=z}{\sum}[[w]]$. That $f$ is Dir minimizing
follows from Theorem (\ref{Theorem 5.1}). We compute
\end{example}
\[
\underset{B_{\rho}(0)}{\int}|f|^{2}dx=\stackrel[0]{2\pi}{\int}\stackrel[0]{\rho}{\int}3r^{\frac{5}{3}}drd\theta=3\stackrel[0]{2\pi}{\int}\stackrel[0]{\rho}{\int}r^{\frac{5}{3}}drd\theta=\frac{9\pi\rho^{\frac{8}{3}}}{4}=C\rho^{\frac{8}{3}},
\]
 where $C>0$. Therefore, $H(\rho)=C\rho^{\frac{5}{3}}$ for some
constant $C>0$, and so $\overline{H}(\rho)=\frac{C\rho^{\frac{5}{3}}}{2\pi\rho}=C\rho^{\frac{2}{3}}$,
hence $\overline{H}''(\rho)<0$.

Our next aim is to show that the boundary regularity condition appearing
in Proposition (\ref{Proposition 1.4}) is indeed verified for a certain
Dir-minimizing functions on the planar unit disk. This will be the
content of Lemma (\ref{Lemma 5.4}), which is of interest by its own
right. Any $z\in\mathbb{C}-\{0\}$ admits a representation of the
form $z=Re^{i\omega}$ for some $R>0,\omega\in[0,2\pi)$. We shall
use the convention $\sqrt{z}=\sqrt{R}e^{\frac{i\omega}{2}}$. 
\begin{lem}
\label{Lemma 5.2 } Fix $r\in(0,\frac{1}{2})\cup(\frac{1}{2},1)$
and define $h:(0,2\pi)\rightarrow\mathbb{C}$ by 
\[
h(\theta)\coloneqq\sqrt{2re^{i\theta}-1}.
\]
 Then $h\in W^{1,2}((0,2\pi),\mathbb{C})$. 

Proof. \textup{Trivially $h\in L^{2}(0,2\pi)$. Furthermore, 
\[
h'(\theta)=\frac{2rie^{i\theta}}{2\sqrt{2re^{i\theta}-1}}=\frac{rie^{i\theta}}{\sqrt{2re^{i\theta}-1}},
\]
 hence 
\[
|h'(\theta)|^{2}=\frac{r}{|2re^{i\theta}-1|}.
\]
 Since $|2re^{i\theta}-1|>0$ for all $\theta\in[0,2\pi]$, there
is some $C_{r}>0$ such that $|2re^{i\theta}-1|>C_{r}$ for all $\theta\in(0,2\pi]$,
hence the asserted. }

\end{lem}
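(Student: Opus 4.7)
The plan is to verify the two defining conditions of $W^{1,2}$ membership by showing that $h$ is classically smooth on the open interval with both $|h|$ and $|h'|$ in $L^\infty$. The only subtlety comes from the branch convention $\sqrt{Re^{i\omega}}=\sqrt{R}\,e^{i\omega/2}$, $\omega\in[0,2\pi)$, which is discontinuous across the positive real axis, so the first task is to check that this discontinuity does not intrude on $(0,2\pi)$.

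First I would record the geometric picture: the curve $\gamma(\theta):=2re^{i\theta}-1$ traces the circle of radius $2r$ centred at $-1$. The hypothesis $r\neq\tfrac{1}{2}$ guarantees $0\notin\gamma([0,2\pi])$; moreover, a direct check shows that the only point at which $\gamma$ meets the positive real axis is $z=2r-1$, attained precisely at $\theta\equiv 0\pmod{2\pi}$ (and only when $r>\tfrac{1}{2}$; for $r<\tfrac{1}{2}$ the circle lies entirely in the left half-plane). In particular, on the open interval $(0,2\pi)$ the argument $\omega(\theta)$ of $\gamma(\theta)$ is smooth, and hence $h=\sqrt{\gamma(\theta)}$ is itself smooth there.

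With smoothness in hand the estimates are immediate. From $|h(\theta)|^2=|\gamma(\theta)|\leq 2r+1$ we get $h\in L^\infty((0,2\pi))\subset L^2$. Differentiating via the chain rule yields $h'(\theta)=rie^{i\theta}/\sqrt{\gamma(\theta)}$, so $|h'(\theta)|^2=r^2/|\gamma(\theta)|$. Since $|\gamma|$ is continuous and non-vanishing on the compact set $[0,2\pi]$, there is a constant $C_r>0$ with $|\gamma(\theta)|\geq C_r$, whence $h'\in L^\infty\subset L^2$. A classically smooth function with bounded derivative on a bounded interval is automatically in $W^{1,2}$, and we conclude.

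The delicate step is the branch-cut verification, and it is what forces the exclusion of $r=\tfrac{1}{2}$: at that value the circle would pass through the origin, the square-root branch would fail to be smooth along $\gamma$, and $|h'|^2=r^2/|\gamma|$ would blow up like $1/|\theta|$ near $\theta=0$, failing integrability. The argument above isolates precisely how the assumption $r\neq\tfrac{1}{2}$ neutralises both pathologies.
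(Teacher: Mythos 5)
Your proof is correct and follows the same route as the paper's: compute $h'$ via the chain rule, bound $|h'|^2$ by a constant using the non-vanishing of $|2re^{i\theta}-1|$ on the compact interval, and conclude $h'\in L^\infty\subset L^2$. The one genuine addition you make is the branch-cut verification: the paper's argument silently differentiates the square root without checking that the fixed branch $\sqrt{Re^{i\omega}}=\sqrt{R}\,e^{i\omega/2}$, $\omega\in[0,2\pi)$, is actually smooth along the curve $\gamma(\theta)=2re^{i\theta}-1$ for $\theta\in(0,2\pi)$. Your observation that $\gamma$ only meets the branch cut (the positive real axis) at $\theta\equiv 0\pmod{2\pi}$, and only when $r>\tfrac{1}{2}$, is exactly what is needed to justify that step, and it is a real strengthening of the write-up rather than a cosmetic one. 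Minor note: the paper has a typo in writing $|h'(\theta)|^2 = r/|2re^{i\theta}-1|$ where your $r^2/|2re^{i\theta}-1|$ is the correct expression; this does not affect the conclusion.
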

We recall that $f\in W^{1,2}(\Omega,\mathcal{A}_{\mathcal{Q}}(\mathbb{R}^{n}))$
if there are $\{\varphi_{j}\}_{j=1}^{m}\subset L^{2}(\Omega,\mathbb{R}_{\geq0})$
such that 1. $x\mapsto\mathcal{G}(f(x),T)\in W^{1,2}(\Omega)$ for
all $T\in\mathcal{A}_{\mathcal{Q}}(\mathbb{R}^{n})$ and 2. $|\partial_{j}\mathcal{G}(f(x),T)|\leq\varphi_{j}$
a.e. for all $T\in\mathcal{A}_{\mathcal{Q}}(\mathbb{R}^{n})$ and
$1\leq j\leq m$. 
\begin{lem}
\label{Lemma 5.4} Fix $r\in(0,\frac{1}{2})\cup(\frac{1}{2},1)$ and
define $f:(0,2\pi)\rightarrow\mathcal{A}_{2}(\mathbb{R}^{2})$ by
\[
f(\theta)\coloneqq\underset{z^{2}=2re^{i\theta}-1}{\sum}[[z]].
\]
 Then $f\in W^{1,2}((0,2\pi),\mathcal{A}_{2}(\mathbb{R}^{2}))${\large{}.}{\large\par}
\end{lem}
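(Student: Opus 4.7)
The plan is to exploit the single-valued representation $f(\theta) = [[h(\theta)]] + [[-h(\theta)]]$, where $h(\theta) = \sqrt{2re^{i\theta}-1}$ is the map provided by Lemma \ref{Lemma 5.2 }. Consequently $\Phi(\theta) := (h(\theta), -h(\theta))$ belongs to $W^{1,2}((0,2\pi),\mathbb{R}^{2}\times\mathbb{R}^{2})$ with $|\Phi'(\theta)| = \sqrt{2}\,|h'(\theta)|$ a.e. I would verify directly the two conditions recalled just before the lemma.

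For the first condition, fix $T = [[P_1]]+[[P_2]] \in \mathcal{A}_{2}(\mathbb{R}^{2})$. Unraveling the definition of $\mathcal{G}$ one obtains
\[
\mathcal{G}(f(\theta),T) \;=\; \min\bigl(\psi_1(\theta),\psi_2(\theta)\bigr),
\]
where $\psi_1(\theta) = \sqrt{|h(\theta)-P_1|^{2}+|h(\theta)+P_2|^{2}}$ and $\psi_2(\theta) = \sqrt{|h(\theta)-P_2|^{2}+|h(\theta)+P_1|^{2}}$. A direct gradient calculation shows that for any $A,B \in \mathbb{R}^{2}$ the map $w \mapsto \sqrt{|w-A|^{2}+|w+B|^{2}}$ is globally $\sqrt{2}$-Lipschitz on $\mathbb{R}^{2}$, so each $\psi_{i}$ is the composition of a Lipschitz map with the $W^{1,2}$ map $h$, hence $\psi_{i} \in W^{1,2}(0,2\pi)$. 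Since $W^{1,2}$ is a lattice (closed under $\min$), the function $\theta \mapsto \mathcal{G}(f(\theta),T)$ lies in $W^{1,2}(0,2\pi)$ for every $T$.

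For the second condition I would take as the common $L^{2}$-envelope the function $\varphi(\theta):=\sqrt{2}\,|h'(\theta)|$, which lies in $L^{2}$ by Lemma \ref{Lemma 5.2 }. The key observation is that the map $\Psi_{T}:\mathbb{R}^{2}\times\mathbb{R}^{2}\to\mathbb{R}$ defined by $\Psi_{T}(w_1,w_2):=\mathcal{G}([[w_1]]+[[w_2]],T)$ is $1$-Lipschitz with respect to the Euclidean metric: the triangle inequality for $\mathcal{G}$ yields $|\mathcal{G}(S_1,T)-\mathcal{G}(S_2,T)|\leq\mathcal{G}(S_1,S_2)$, while $\mathcal{G}$ between two $2$-point masses is bounded above by the Euclidean distance induced by the identity pairing of their points. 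Writing $\mathcal{G}(f(\cdot),T) = \Psi_{T}\circ\Phi$ and composing Lipschitz with Sobolev maps gives the pointwise bound $|\partial_\theta\mathcal{G}(f(\theta),T)|\leq|\Phi'(\theta)| = \varphi(\theta)$.

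The step I expect to be the main obstacle is ensuring that this a.e.\ bound holds on a single null set not depending on $T$, since the chain rule for Lipschitz–Sobolev compositions a priori may fail off a $T$-dependent null set. I would sidestep this by estimating difference quotients directly: at every point $\theta_0$ where $h$ is differentiable (a full-measure set that does not depend on $T$),
\[
\bigl|\mathcal{G}(f(\theta),T)-\mathcal{G}(f(\theta_0),T)\bigr| \;\leq\; \sqrt{2}\,|h(\theta)-h(\theta_0)|,
\]
so dividing by $|\theta-\theta_0|$ and passing to the limit yields $|\partial_\theta\mathcal{G}(f(\theta_0),T)|\leq\varphi(\theta_0)$ at $\theta_0$, simultaneously for all $T$. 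Once this $T$-independent envelope is in place, both conditions defining $W^{1,2}((0,2\pi),\mathcal{A}_{2}(\mathbb{R}^{2}))$ are satisfied and the lemma follows.
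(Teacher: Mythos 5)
Your proposal is correct and follows essentially the same path as the paper: represent $\mathcal{G}(f(\cdot),T)$ as a minimum of two Lipschitz functions precomposed with the $W^{1,2}$ map $h$ from Lemma~\ref{Lemma 5.2 }, then produce a $T$-independent $L^{2}$ bound on the derivative. Your envelope $\sqrt{2}\,|h'|$ is sharper than the paper's $8(|\partial_{\theta}h_{1}|+|\partial_{\theta}h_{2}|)$ because you exploit the $1$-Lipschitz property of $S\mapsto\mathcal{G}(S,T)$ directly at the metric level rather than estimating $\alpha_{T},\beta_{T}$ term by term, and your difference-quotient argument cleanly handles the (in fact harmless) issue of $T$-dependent null sets.
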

\textit{Proof. }Let $T=\stackrel[i=1]{2}{\sum}[[T_{i}]]\in\mathcal{A}_{2}(\mathbb{R}^{2})$.
Set $h(\theta)=\sqrt{2re^{i\theta}-1}$ and for $P=(P_{1},P_{2})\in\mathbb{R}^{2}\times\mathbb{R}^{2}$
denote by $d_{P}:\mathbb{R}^{2}\rightarrow\mathbb{R}$ the function
\[
d_{P}(x)\coloneqq\sqrt{|x-P_{1}|^{2}+|x-P_{2}|^{2}}.
\]
 It is not difficult to see that for any fixed $P$, $d_{P}$ is Lipschitz.
Put 
\[
\alpha_{T}(\theta)\coloneqq(d_{(-T_{1},T_{2})}\circ h)(\theta),\beta_{T}(\theta)=(d_{(T_{1},-T_{2})}\circ h)(\theta).
\]
Note that 

\begin{equation}
\mathcal{G}(f(\theta),T)=\frac{\alpha_{T}(\theta)+\beta_{T}(\theta)-|\alpha_{T}(\theta)-\beta_{T}(\theta)|}{2}.\label{eq:}
\end{equation}

By Lemma (\ref{Lemma 5.2 }), $\alpha_{T},\beta_{T}$ are a composition
of a Lipschitz function on a $W^{1,2}(0,2\pi)$ function. Therefore
$\alpha_{T},\beta_{T}$ are also $W^{1,2}(0,2\pi)$. In view of Equation
(\ref{eq:}) it is now apparent that $\theta\mapsto\mathcal{G}(f(\theta),T)\in W^{1,2}(0,2\pi)$.
In addition, there is at most one $\theta_{0}\in(0,2\pi)$ for which
$\alpha_{T}(\theta_{0})=0$. An elementary calculation shows that
the following estimate is obeyed for $\theta\in(0,2\pi)-\{\theta_{0}\}$:
\[
|(\partial_{\theta}\alpha_{T})(\theta)|\leq2|(\partial_{\theta}h_{1})(\theta)|+2|(\partial_{\theta}h_{2})(\theta)|
\]
and $\partial_{\theta}h_{i}\in L^{2}(0,2\pi)$, $i=1,2$ by Lemma
(\ref{Lemma 5.2 }). The same is true for $\beta_{T}$. Combining
these estimates with equation (\ref{eq:}) we see that 

\[
|\partial_{\theta}\mathcal{G}(f(\theta),T)|\leq8(|(\partial_{\theta}h_{1})(\theta)|+|(\partial_{\theta}h_{2})(\theta)|)\in L^{2}(0,2\pi),
\]

for all but finitely many $\theta\in(0,2\pi)$. So strictly by definition,
\[
f\in W^{1,2}((0,2\pi),\mathcal{A}_{2}(\mathbb{R}^{2})).
\]
 
\begin{flushright}
$\square$
\par\end{flushright}

\textit{Proof of Proposition}\textsl{ \ref{Proposition 1.4}.} That
$f$ is Dir minimizing follows from Theorem (\ref{Theorem 5.1}).
Furthermore, by Lemma (\ref{Lemma 5.4}) $f|_{\partial B_{r}}\in W^{1,2}(\partial B_{r},\mathcal{A}_{\mathrm{2}}\mathrm{(\mathbb{R^{\mathrm{2}}\mathrm{)}}})$
for all $r\neq\frac{1}{2}$.

We compute:
\[
\underset{B_{\rho}(0)}{\int}|f|^{2}dx=2\stackrel[0]{\rho}{\int}\stackrel[0]{2\pi}{\int}r\sqrt{(2r\cos\theta-1)^{2}+4r^{2}\sin^{2}(\theta)}d\theta dr.
\]
 Therefore 
\[
H(\rho)=2\stackrel[0]{2\pi}{\int}\rho\sqrt{(2\rho\cos\theta-1)^{2}+4\rho^{2}\sin^{2}(\theta)}d\theta,
\]

which implies 
\[
\overline{H}(\rho)=\frac{1}{\pi}\stackrel[0]{2\pi}{\int}\sqrt{(2\rho\cos\theta-1)^{2}+4\rho^{2}\sin^{2}(\theta)}d\theta
\]

\[
=\frac{1}{\pi}\stackrel[0]{2\pi}{\int}\sqrt{1+4\rho^{2}-4\rho\cos\theta}d\theta\coloneqq\frac{1}{\pi}A(\rho).
\]

We compute $A'''(\rho)$ for all $\frac{1}{2}<\rho<1$:

\[
A'(\rho)=\stackrel[0]{2\pi}{\int}\frac{8\rho-4\cos\theta}{2\sqrt{1+4\rho^{2}-4\rho\cos\theta}}d\theta=\stackrel[0]{2\pi}{\int}\frac{4\rho-2\cos\theta}{\sqrt{1+4\rho^{2}-4\rho\cos\theta}}d\theta
\]
.

\[
A''(\rho)=\stackrel[0]{2\pi}{\int}\frac{4\sqrt{1+4\rho^{2}-4\rho\cos\theta}-\frac{2(4\rho-2\cos\theta)^{2}}{2\sqrt{1+4\rho^{2}-4\rho\cos\theta}}}{1+4\rho^{2}-4\rho\cos\theta}d\theta
\]

\[
=\stackrel[0]{2\pi}{\int}\frac{4\sqrt{1+4\rho^{2}-4\rho\cos\theta}-\frac{(4\rho-2\cos\theta)^{2}}{\sqrt{1+4\rho^{2}-4\rho\cos\theta}}}{1+4\rho^{2}-4\rho\cos\theta}d\theta
\]

\[
=\stackrel[0]{2\pi}{\int}\frac{4(1+4\rho^{2}-4\rho\cos\theta)-(4\rho-2\cos\theta)^{2}}{(1+4\rho^{2}-4\rho\cos\theta)^{\frac{3}{2}}}d\theta
\]

\[
=\stackrel[0]{2\pi}{\int}\frac{4\sin^{2}\theta}{(1+4\rho^{2}-4\rho\cos\theta)^{\frac{3}{2}}}d\theta.
\]

\[
A'''(\rho)=\stackrel[0]{2\pi}{\int}-6\sin^{2}\theta(1+4\rho^{2}-4\rho\cos\theta)^{-\frac{5}{2}}(8\rho-4\cos\theta)d\theta
\]

\begin{equation}
=-24\stackrel[0]{2\pi}{\int}\sin^{2}\theta(1+4\rho^{2}-4\rho\cos\theta)^{-\frac{5}{2}}(2\rho-\cos\theta)d\theta.\label{eq:2}
\end{equation}

We note that as long as $\frac{1}{2}<\rho<1$, the RHS of equation
(\ref{eq:2}) is strictly negative, hence the claim. 
\begin{flushright}
$\square$
\par\end{flushright}
\begin{doublespace}

\section{Appendix}
\end{doublespace}

This section was explained to me by Dan Mangoubi. We present here
a proof of Theorem (\ref{Theorem 1.1 }).
\begin{prop}
\textsl{\label{Proposition 5.1 } }Let $u$ be harmonic on $B_{1}(0)$.
Let $\phi:[0,1]\rightarrow\mathbb{R}$ be smooth and non decreasing.
Define $d(r)=\underset{B_{1}(0)}{\int}u^{2}(rx)\phi'(||x||^{2})dx$.
Then $d$ is absolutely monotonic. 
\end{prop}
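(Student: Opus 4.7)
The plan is to expand $u$ in spherical harmonics and exploit their orthogonality to rewrite $d(r)$ as a power series in $r^2$ with nonnegative coefficients, from which absolute monotonicity is immediate.

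Since $u$ is harmonic on $B_1(0)$, it admits a spherical harmonic decomposition $u(x)=\sum_{k=0}^{\infty}P_k(x)$, where each $P_k$ is a harmonic polynomial homogeneous of degree $k$ and the series converges uniformly on $\overline{B_s}$ for every $s<1$. By homogeneity, for fixed $r\in[0,1)$ one has $u(rx)=\sum_{k\ge 0}r^k P_k(x)$ uniformly in $x\in B_1(0)$ (since $|rx|\le r<1$), so
\[
u^2(rx)=\sum_{j,k\ge 0}r^{j+k}P_j(x)P_k(x),
\]
with the partial sums converging uniformly on $B_1(0)$. Integration against the bounded continuous weight $\phi'(\|x\|^2)$ is therefore justified term by term.

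The crucial ingredient is the $L^2$-orthogonality of spherical harmonics of distinct degrees. For $j\ne k$, homogeneity and scaling give
\[
\int_{\partial B_\rho}P_j P_k\,d\sigma=\rho^{j+k+m-1}\int_{\partial B_1}P_j P_k\,d\sigma=0,
\]
and since $\phi'(\|x\|^2)$ is radial, slicing $B_1(0)$ into spheres $\partial B_\rho$ yields
\[
\int_{B_1(0)}P_j(x)P_k(x)\,\phi'(\|x\|^2)\,dx=\int_0^1\phi'(\rho^2)\int_{\partial B_\rho}P_j P_k\,d\sigma\,d\rho=0,\qquad j\ne k.
\]
Only the diagonal contributions survive, and one obtains
\[
d(r)=\sum_{k\ge 0}c_k r^{2k},\qquad c_k:=\int_{B_1(0)}P_k(x)^2\,\phi'(\|x\|^2)\,dx.
\]
Because $\phi$ is nondecreasing, $\phi'\ge 0$, hence every $c_k\ge 0$; any convergent power series in $r^2$ with nonnegative coefficients is absolutely monotonic, as every derivative is again a series of the same form. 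This proves the proposition.

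The step I expect to require the most care is the analytic bookkeeping of the termwise interchanges, since the spherical-harmonic expansion converges uniformly only on compact subsets of $B_1$, whereas the integral defining $d$ runs up to $\|x\|=1$. I would first establish $d(r)=\sum c_k r^{2k}$ for the truncated integral over $B_s$ with $s<1$, where uniform convergence makes all interchanges trivial, observe that the coefficients $c_k$ do not depend on $s$, and then pass to $s\uparrow 1$ by monotone convergence, exploiting that the integrand $P_k(x)^2\phi'(\|x\|^2)$ is pointwise nonnegative. Once this is in hand, the orthogonality computation and the sign analysis are elementary.
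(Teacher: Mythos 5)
Your proof is correct, but it takes a genuinely different route from the paper's. The paper differentiates $d(r)$ once, changes variables, applies Green's identity and the divergence theorem, and uses the pointwise identity $\Delta(u^2)=2|\nabla u|^2$ for $u$ harmonic to show that $d'(r)=\tfrac{1}{2}r\int_{B_1(0)}|\nabla u|^2(rx)\,\varphi'(\|x\|^2)\,dx$ for a new smooth nondecreasing $\varphi$; since each $\partial_i u$ is again harmonic, one iterates this reduction to get $d^{(k)}\geq 0$ for all $k$. You instead expand $u$ in homogeneous harmonic polynomials $\sum_k P_k$, use the $L^2(\partial B_\rho)$-orthogonality of distinct degrees and the radiality of the weight $\phi'(\|x\|^2)$ to kill the cross terms, and conclude that $d(r)=\sum_k c_k r^{2k}$ with $c_k=\int_{B_1}P_k^2\,\phi'(\|x\|^2)\,dx\geq 0$, from which absolute monotonicity is immediate. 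Your argument is shorter and reveals the underlying power-series structure explicitly, so it is arguably the more transparent proof for real-valued harmonic $u$; the paper's integration-by-parts argument, while longer, is more in keeping with the $\mathcal{Q}$-valued machinery used elsewhere in the paper, where spherical-harmonic expansions are unavailable.

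One small slip in your "careful step": if you truncate the integral to $B_s$, the coefficients $c_k(s)=\int_{B_s}P_k^2\,\phi'(\|x\|^2)\,dx$ do depend on $s$, contrary to what you wrote; what saves the argument is that $c_k(s)\uparrow c_k$ as $s\uparrow 1$ by monotone convergence. In fact this truncation is unnecessary: for fixed $r<1$ the argument $rx$ ranges over the compact set $\overline{B_r(0)}\subset B_1(0)$ as $x$ ranges over $\overline{B_1(0)}$, so the series $u(rx)=\sum_k r^k P_k(x)$ converges absolutely and uniformly on all of $\overline{B_1(0)}$, and the term-by-term integration is directly justified.
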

\begin{proof}
We denote by $u_{i}$ the derivative of $u$ with respect to the $i-$th
variable. Define $\psi(\xi)=\phi(\frac{||\xi||^{2}}{r^{2}})$. 
\[
d'(r)=\underset{\text{\ensuremath{B_{1}(0)}}}{{\normalcolor \int}}(2\stackrel[\text{\ensuremath{i}=1}]{m}{\sum}u(rx)u_{i}(rx)x_{i})\phi'(||x||\text{\texttwosuperior})dx
\]

\[
=\underset{B_{r}(0)}{\int}(2\stackrel[\text{\ensuremath{i}=1}]{m}{\sum}u(\xi)u_{i}(\xi)\frac{\xi_{i}}{r^{m+1}})\phi'(\frac{||\xi||^{2}}{r^{2}})d\xi
\]

\[
=\frac{1}{r^{m-1}}\underset{B_{r}(0)}{\int}(2\stackrel[\text{\ensuremath{i}=1}]{m}{\sum}u(\xi)u_{i}(\xi)\frac{\xi_{i}}{r^{2}})\phi'(\frac{||\xi||^{2}}{r^{2}})d\xi=\frac{1}{2r^{m-1}}\underset{B_{r}(0)}{\int}\nabla u^{2}\cdot\nabla\psi d\xi
\]

\[
\overset{\ast}{=}\frac{1}{2r^{m-1}}(\underset{\partial B_{r}(0)}{\int}\psi\nabla_{\nu}u^{2}d\sigma-\underset{B_{r}(0)}{\int}\psi\Delta u^{2}d\xi)
\]

\[
=\frac{1}{2r^{m-1}}(\underset{\partial B_{r}(0)}{\int}\phi(1)\nabla_{\nu}u^{2}d\sigma-\underset{B_{r}(0)}{\int}\psi\Delta u^{2}d\xi)
\]

\[
\overset{\ast\ast}{=}\frac{1}{2r^{m-1}}(\underset{B_{r}(0)}{\int}\phi(1)\Delta u^{2}d\xi-\underset{B_{r}(0)}{\int}\psi\Delta u^{2}d\xi)=\frac{1}{2r^{m-1}}\underset{B_{r}(0)}{\int}(\phi(1)-\psi)\Delta u^{2}d\xi.
\]

The equality $\ast$ is by Green's identity and the equality $\ast\ast$
is by the divergence theorem. Taking advantage of the identity $\Delta u^{2}=|\nabla u|^{2}$
for $u$ harmonic we obtain
\[
d'(r)=\frac{1}{2r^{m-1}}\underset{B_{r}(0)}{\int}(\phi(1)-\psi)|\nabla u|^{2}d\xi=\frac{1}{2}r\underset{B_{1}(0)}{\int}(\phi(1)-\phi(||x||^{2}))|\nabla u|^{2}(rx)dx.
\]
Let $\Phi$ be some anti derivative of $\phi$ and define $\varphi:[0,1]\rightarrow$\ensuremath{\mathbb{R}}
by $\varphi(\rho)\coloneqq\phi(1)\rho-\Phi(\rho)$. Evidently $\varphi$
is non decreasing and according to the computation we preformed 

\[
d'(r)=\frac{1}{2}r\underset{B_{1}(0)}{\int}|\text{\ensuremath{\nabla}}u|^{2}(rx)\varphi'(||x||\text{\texttwosuperior})dx.
\]

Since each $u_{i}$ is harmonic we can iterate the the same argument
in order to obtain $d^{(k)}(r)\ge0$ for all $k$. 
\end{proof}
As a corollary we obtain a proof of theorem (\ref{Theorem 1.1 })
\begin{cor}
Let $u$ be harmonic on $B_{1}(0)$. Then $\overline{H}$ is absolutely
monotonic.
\end{cor}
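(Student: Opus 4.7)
The plan is to realize $\overline{H}(r)$, up to a multiplicative constant, as a pointwise limit of the absolutely monotonic functions produced by Proposition \ref{Proposition 5.1 }, for a family of test weights $\phi$ whose derivatives concentrate (as an approximate $\delta$-function) at $\rho=1$.

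Concretely, for each integer $n\ge 1$ I would take the smooth nondecreasing function $\phi_n(\rho)=\rho^n$ on $[0,1]$ and set
\[
d_n(r)\;:=\;\int_{B_1(0)} u^2(rx)\,\phi_n'(\|x\|^2)\,dx\;=\;n\int_{B_1(0)} u^2(rx)\,\|x\|^{2(n-1)}\,dx.
\]
Proposition \ref{Proposition 5.1 } immediately gives absolute monotonicity of each $d_n$, so $d_n^{(k)}(r)\ge 0$ for every $k\ge 0$ and every $r\in(0,1)$. Passing to polar coordinates $x=s\theta$ on $B_1(0)$ and using $\int_{S^{m-1}} u^2(rs\theta)\,d\theta=C_m\,\overline{H}(rs)$, this rewrites as
\[
d_n(r)\;=\;nC_m\int_0^1 s^{2n+m-3}\,\overline{H}(rs)\,ds.
\]
Since $u$ is harmonic (hence smooth), $\overline{H}$ is smooth, so differentiating $k$ times under the integral is legitimate and yields
\[
d_n^{(k)}(r)\;=\;nC_m\int_0^1 s^{2n+m-3+k}\,\overline{H}^{(k)}(rs)\,ds\;\ge\;0.
\]

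The last step is to send $n\to\infty$. The weighted measures $n\,s^{2n+m-3+k}\,ds$ on $[0,1]$ have total mass tending to $\tfrac{1}{2}$ and concentrate at $s=1$; a standard elementary argument gives $n\int_0^1 s^{2n+m-3+k} g(s)\,ds\to \tfrac{1}{2}g(1)$ for every continuous $g$ on $[0,1]$. Applying this with $g(s)=\overline{H}^{(k)}(rs)$ gives $d_n^{(k)}(r)\to\tfrac{C_m}{2}\,\overline{H}^{(k)}(r)$, and nonnegativity passes to the limit to produce $\overline{H}^{(k)}(r)\ge 0$ for every $k$ and every $r\in(0,1)$, which is absolute monotonicity of $\overline{H}$.

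No serious obstacle arises: Proposition \ref{Proposition 5.1 } is doing all the real work and the rest is a routine concentration-of-weights procedure. The only point deserving a word of care is the differentiation under the integral and the limit interchange, both of which are easily justified because harmonicity of $u$ makes every integrand in sight smooth on compact subsets of $B_1(0)$.
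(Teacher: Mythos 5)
Your proof is correct, and Proposition \ref{Proposition 5.1 } is indeed doing the heavy lifting, but the mechanism by which you extract $\overline{H}$ from it is genuinely different from the paper's. The paper takes a single test weight $\phi(t)=t$, obtains absolute monotonicity of $d(r)=\int_{B_1(0)}u^2(rx)\,dx$, and then writes $\overline{H}(r)=\tfrac{1}{C_m}\bigl[m\,d(r)+r\,d'(r)\bigr]$, concluding by the closure of absolutely monotonic functions under differentiation, products, and sums (since $r\mapsto r$ is itself absolutely monotonic). You instead use the whole family $\phi_n(\rho)=\rho^n$ and let the weights $n\,s^{2n+m-3+k}\,ds$ converge weakly to $\tfrac12\delta_1$, recovering $\overline{H}^{(k)}(r)$ as $\tfrac{2}{C_m}\lim_n d_n^{(k)}(r)$. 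Your route buys elementarity in a different place: it avoids invoking any algebraic closure properties of the class of absolutely monotonic functions and replaces them with a soft concentration-of-mass limit; the price is that you must justify differentiating under the integral and interchanging limits, which you correctly observe is routine by smoothness of $u$ on compacta. The paper's route is shorter and avoids limits entirely, but implicitly relies on the product rule for absolutely monotonic functions; both are sound. One small point worth making explicit in your version: $g(s)=\overline{H}^{(k)}(rs)$ is continuous on the closed interval $[0,1]$ because for fixed $r<1$ the function $\rho\mapsto\overline{H}(\rho)=\tfrac{1}{C_m}\int_{S^{m-1}}u^2(\rho\theta)\,d\theta$ extends smoothly to $\rho=0$, so the weak-limit lemma applies on all of $[0,1]$ and not just $(0,1]$.
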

\textit{Proof. }
\[
\overline{H}(r)=\frac{1}{C_{m}r^{m-1}}\underset{\partial B_{r}(0)}{\int}u^{2}(x)d\sigma
\]

\[
=\frac{1}{C_{m}r^{m-1}}\frac{d}{dr}(\underset{B_{r}(0)}{\int}u^{2}(x)dx)=\frac{1}{C_{m}r^{m-1}}\frac{d}{dr}(\underset{B_{r}(0)}{\int}u^{2}(x)dx)
\]

\[
=\frac{1}{C_{m}r^{m-1}}\frac{d}{dr}(r^{m}\underset{B_{1}(0)}{\int}u^{2}(rx)dx)
\]

\[
=\frac{1}{C_{m}r^{m-1}}(mr^{m-1}\underset{B_{1}(0)}{\int}u^{2}(rx)dx+r^{m}\frac{d}{dr}(\underset{B_{1}(0)}{\int}u^{2}(rx)dx))
\]

\[
=\frac{1}{C_{m}}(m\underset{B_{1}(0)}{\int}u^{2}(rx)dx+r\frac{d}{dr}(\underset{B_{1}(0)}{\int}u^{2}(rx)dx)).
\]

Taking $\phi(t)=t$ in Proposition (\ref{Proposition 5.1 }), we see
that last expression is a sum of absolutely monotonic functions, and
hence absolutely monotonic. 
\begin{flushright}
$\square$
\par\end{flushright}

\medskip{}

\textbf{Acknowledgment}. This work is part of the author's MA thesis.
I wish to express my most deep gratitude towards my supervisor Dan
Mangoubi for suggesting and guiding me in this problem, for many fruitful
and stimulating discussions and for carefully reading previous versions
of this manuscript and providing insightful comments. In particular,
for explaining the proof of Theorem (\ref{Theorem 1.1 }). I would
also like to thank Camillo De Lellis for his interest in this work.
In particular, for suggesting the counterexample that appears in Proposition
(\ref{Proposition 1.4}) and for explaining the proof of Theorem (\ref{Theorem 5.1}).
I thank an anonymous referee for various comments which improved the
quality of this work.


\begin{thebibliography}{1}
\begin{onehalfspace}
\bibitem[1]{1} S. Agmon. \textit{Unicit\textit{\'e} et convexit\textit{\'e}
dans les problemes diff\textit{\'e}rentiels}. S{\'e}minaire
de Math{\'e}matiques Sup{\'e}rieures.
No. 13 ({\'E}t{\'e},
1965). Les Presses de l\textquoteright Universit{\'e}
de Montr{\'e}al, Montreal, Que. 1966. 

\bibitem[2]{2} F. J. Almgren, Jr. \textit{Almgren\textquoteright s
big regularity paper}, volume 1 of World Scientific Monograph Series
in Mathematics. World Scientific Publishing Co. Inc., River Edge,
NJ, 2000. 

\bibitem[3]{3} S. Bernstein. \textit{Sur la d\textit{\'e}finition
et les propri\textit{\'e}t\textit{\'e}s des fonctions analytiques
d\textquoteright unevariable r\textit{\'e}elle}. Math. Ann.75 (1914),
no. 4, 449--468 (French).

\bibitem[4]{4} C. D. Lellis. \textit{Private communication}. 

\bibitem[5]{5} C. D. Lellis and E. N. Spadaro. \textit{Q-valued functions
and approximation of minimal currents}. PhD Thesis. 2009.

\bibitem[6]{6} G. Lippner and D. Mangoubi.\textit{ Harmonic functions
on the lattice: Absolute monotonicity and propagation of smallness}.
Duke Math. J. 164, no. 13. 2015.

\bibitem[7]{7} G. P\'olya and G. Szego. \textit{Problems and theorems
in analysis}. \textit{Problems and theorems in analysis}. Vol. I:
Series, integral calculus, theory of functions, Springer-Verlag, New
York, 1972. Translated from the German by D. Aeppli; Die Grundlehren
der mathematischen Wissenschaften, Band 193.

\bibitem[8]{8} E. N. Spadaro. \textit{Complex varieties and higher
integrability of dir-minimizing q-valued functions}. Manuscripta mathematica,
ISSN 0025-2611, Vol. 132, N. 3-4, 2010, 415-429.
\end{onehalfspace}
\end{thebibliography}
\end{document}